\newtheorem{theorem}{Theorem}[section]
\newtheorem{lemma}{Lemma}[section]
\newtheorem{proposition}{Proposition}[section]
\numberwithin{equation}{section}
\def\X{\mathcal{ X}}
\def\Z{\mathcal{ Z}}
\def\U{\mathcal{ U}_n}
\def\V{\mathcal{ V}_n}
\def\A{\mathbb{A}}
\def\e{\epsilon}
\def\v{{\rm Var}}
\def\S{\mathcal{ S}_n}
\def\tZ{\mathcal{R}(Z)}
\newcommand{\newparagraph}[1]{\bigskip \\ \textbf{#1}}
\begin{document}
\title{\LARGE Lower bounds for moments of global scores of pairwise Markov chains}

\author[J.~Lember]{J\"uri Lember}
\address{J.~Lember, Institute of Mathematics and Statistics,
University of Tartu, J. Liiv 2, 50409 Tartu, Estonia.}
\email{juri.lember\@@ut.ee}

\author[H.~Matzinger]{Heinrich Matzinger}
\address{H.~Matzinger, School of Mathematics, Georgia Tech, Atlanta (GA), 30332 USA.}

\email{matzi\@@math.getech.edu}

\author[J.~Sova]{Joonas Sova}
\address{J.~Sova,  Institute of Mathematics and Statistics,
University of Tartu, J. Liivi 2, 50409 Tartu, Estonia.}

\email{joonas.sova\@@ut.ee}

\author[F.~Zucca]{Fabio Zucca}
\address{F.~Zucca, Dipartimento di Matematica,
Politecnico di Milano,
Piazza Leonardo da Vinci 32, 20133 Milano, Italy.}
\email{fabio.zucca\@@polimi.it}

\begin{abstract}
Let $X_1,X_2,\ldots$ and $Y_1,Y_2,\ldots$ be two random sequences so
that every random variable takes values in a finite set
$\mathbb{A}$. We consider a global similarity score
$L_n:=L(X_1,\ldots,X_n;Y_1,\ldots,Y_n)$ that measures the homology
(relatedness) of words $(X_1,\ldots,X_n)$ and $(Y_1,\ldots,Y_n)$. A
typical example of such score is the length of the longest common
subsequence. We study the order of central absolute moment
$E|L_n-EL_n|^r$ in the case where the two-dimensional process
$(X_1,Y_1),(X_2,Y_2),\ldots$ is a Markov chain on $\mathbb{A}\times
\mathbb{A}$. This is a very general model involving independent
Markov chains, hidden Markov models, Markov switching models and
many more. Our main result establishes a general condition that
guarantees that $E|L_n-EL_n|^r\asymp n^{r\over 2}$. We also perform
simulations indicating  the validity of the condition.
\end{abstract}

\maketitle

\noindent {\bf Keywords}: Random sequence comparison, longest common sequence, fluctuations,  Waterman conjecture.

\noindent {\bf AMS subject classification}: 60K35, 41A25, 60C05
%

\section{Introduction}
\subsection{Sequence comparison setting}\label{sec:pre}
Throughout this paper  $X=(X_1,X_2,\ldots X_n)$ and
$Y=(Y_1,Y_2,\ldots Y_n)$ are two  random strings, usually referred
as sequences, so that every random variable $X_i$ and $Y_i$ take
values on a finite alphabet $\mathbb{A}$. Since the sequences $X$
and $Y$ are not necessarily independent nor identically distributed,
it is convenient to consider the two-dimensional sequence
$Z=((X_1,Y_1),\ldots,(X_n,Y_n))$. The sample space of $Z$ will be
denoted by $\Z_n$. Clearly $\Z_n\subseteq (\A\times \A)^n$ but,
depending on the model, the inclusion can be strict.
\\
The problem of measuring the similarity of $X$ and $Y$ is central in
many areas of applications including computational molecular biology
\cite{christianini, Durbin, Pevzner, SmithWaterman,
watermanintrocompbio} and computational linguistics
\cite{YangLi,LinOch,Melamed1,Melamed2}. In this paper, we adopt the
same notation as in \cite{LemberMatzingerTorres:2012(2)}, namely we
consider a general scoring scheme, where $S:\mathbb{A}\times
\mathbb{A}\rightarrow \mathbb{R}^+$ is a {\it pairwise scoring
function} that  assigns a score to each couple of letters from
$\mathbb{A}$.
An {\it alignment} is a pair $(\rho,\tau)$ where
$\rho=(\rho_1,\rho_2,\ldots,\rho_k)$ and
$\tau=(\tau_1,\tau_2,\ldots,\tau_k)$ are two increasing sequences of
natural numbers, i.e. $1\leq \rho_1<\rho_2<...<\rho_k\leq n$ and
$1\leq \tau_1<\tau_2<\ldots<\tau_k\leq n.$ The integer $k$ is the
number of aligned letters, $n-k$ is the number of non-aligned
letters. Given the pairwise scoring function $S$ the score of the
alignment $(\rho,\tau)$ when aligning $X$ and $Y$ is defined by
$$U_{(\rho,\tau)}(X,Y):=\sum_{i=1}^kS(X_{\rho_i},Y_{\tau_i})+ \delta (n-k),$$
where $\delta\in \mathbb{R}$ is another scoring parameter. Typically
$\delta\leq 0$ so that many non-aligned letters in the alignment
reduce the score. If $\delta\leq 0$, then its absolute value
$|\delta|$ is often called  the {\it gap penalty}. Given $S$ and
$\delta$, the optimal alignment score of $X$ and $Y$ is defined to
be
\begin{equation}\label{Ln}
L_n:=L(X,Y)=L(Z):=\max_{(\rho,\tau)}U_{(\rho,\tau)}(X,Y),
\end{equation}
where the maximum above is taken over all possible alignments.
Sometimes, when we talk about a {\it string comparison model}, we
refer to the study of $L_n$ for given sequences $X$ and $Y$, score
function $S$ and gap penalty $\delta$.
It is important to note  that for any constant gap price
$\delta\in\mathbb{R}$, changing the value of one of the $2n$ random
variables $X_{1},\ldots,X_{n},Y_{1},\ldots,Y_{n}$ changes the value
of $L_{n}$ by at most $\Delta$, where
\begin{align}\label{eq:MaxScoreChange}
\Delta:=\max_{u,v,w\in {\A}}|S(u,v)-S(u,w)|.
\end{align}
\noindent When $\delta=0$ and the  scoring function assigns one to
every pair of similar letters and zero to all other pairs, i.e.
\begin{equation}\label{LCS-scoring}
S(a,b)=\left\{
           \begin{array}{ll}
             1, & \hbox{if $a=b$;} \\
             0, & \hbox{if $a\ne b$.}
           \end{array}
         \right.
\end{equation}
then $L(Z)$ is just the maximal number of aligned letters, also
called the length of the {\it longest common subsequence}
(abbreviated by LCS) of $X$ and $Y$. In this article,  to
distinguish the length of LCS from another scoring schemes, we shall
denote it via $\ell_n:=\ell(Z)=\ell(X,Y))$. In other words $\ell(Z)$
is the maximal $k$ so that there there exists an alignment
$(\rho,\tau)$ such that $X_{\rho_i}=Y_{\tau_i}$, $i=1,\ldots,k$.
Note that the optimal alignment $(\rho,\tau)$ as well as the longest
common subsequence $X_{\rho_1},\ldots,X_{\rho_k}$ is not typically
unique. The length of LCS is probably the most important and the
most studied measure of global similarity between strings.
\subsection{History and overview}
The problem of measuring the similarity of two strings is of central
importance in many applications including computational molecular
biology, linguistics etc. For instance, in  computational molecular
biology, the similarity of two sequences (for example DNA- or
proteins) is used to determine their  homology (relatedness) --
similar strings are more likely to be the decedents of the same
ancestor. Out of all possible similarity measures, the global score
$L(X,Y)$, in particular the length of LCS, is probably the most
common measure of similarity. Its popularity is  partially due to
the well-known dynamic programming algorithms (so-called
Needleman-Wunsch algorithm) that allows to calculate the optimal
alignment with complexity $O(n^2)$ and the score with complexity
$O(n)$ \cite{christianini, Durbin,
Pevzner,watermanintrocompbio,china}.\\\\
Unfortunately, although easy to apply and define, it turns out that
the theoretical study of $L_n$ is very difficult. It is easy to see
that the global score is superadditive. This implies that  when $Z$
is taken from an ergodic process, by Kingman's subadditive ergodic
theorem, there exists a constant $\gamma^{*}$ such that
\begin{align}\label{eq:ConvLn}
\frac{L_{n}}{n}\rightarrow\gamma^{*}\quad\text{a.}\,\text{s.}\,\,\,\text{and in }\,L_{1},\quad\text{as }\,n\rightarrow\infty.
\end{align}
In the LCS case, the existence of $\gamma^{*}$ was first shown by
Chv\'{a}tal and Sankoff~\cite{Sankoff1}, but its exact value (or an
expression for it), although well estimated, remains unknown even
for i.i.d. Bernoulli sequences. Alexander~\cite{Alexander1}
established the rate of convergence of the left hand side of equation~\eqref{eq:ConvLn} in the LCS
case, a result extended by Lember, Matzinger and
Torres~\cite{LemberMatzingerTorres:2012} to general
scoring functions.\\\\
In their leading paper \cite{Sankoff1},  Chv\'{a}tal and Sankoff
first studied the the asymptotic order of $\text{Var}(\ell_n)$  and
based on some simulations, they conjectured that
$\text{Var}(\ell_{n})=o(n^{2/3})$, for $X$ and $Y$ independent
i.i.d. symmetric Bernoulli. In the case of  independent i.i.d.
sequences, it follows from Efron-Stein inequality (see, e.g.
\cite{BoucheronLugosiMassart:2013}) that
\begin{align}\label{eq:UpperBoundLn}
\text{Var}(L_{n})\leq C_{2}\,n,\quad\text{for all }\,n\in\mathbb{N},
\end{align}
where $C_{2}>0$ is an universal constant, independent of $n$. For
the LCS case, this result was proved by Steele~\cite{Steele86}.
In~\cite{Waterman:1994}, Waterman asked whether or not the linear
bound on the variance can be improved, at least in the LCS case. His
simulations showed that, in some special cases (including the LCS
case), $\text{Var}(L_{n})$ should grow linearly in $n$. These
simulations suggest the linear lower bound $\text{Var}(\ell_{n})\geq
c \cdot n$, which would invalidate the conjecture of Chv\'{a}tal and
Sankoff. In the past ten years, the asymptotic behavior of
$\text{Var}(\ell_{n})$  has been investigated by Bonetto, Durringer,
Houdr\'{e}, Lember, Matzinger and Torres, under various choices of
independent sequences $X$ and $Y$ (cf.~\cite{BonettoMatzinger:2006},
\cite{DurringerLemberMatzinger:2007}, \cite{HoudreMatzinger:2007},
\cite{LemberMatzinger:2009},
\cite{LemberMatzingerTorres:2012(2)},\cite{LemberMatzingerTorres:2013}
\cite{HoudreMa:2014}, \cite{HoudreLember} etc). In particular, in
\cite{HoudreLember} and \cite{LemberMatzingerTorres:2012(2)} a
general approach for obtaining the lower bound for moments
$E\Phi(L_n-EL_n|)$, where $\Phi$ is a convex increasing function we
worked out. For more detailed history of the problem as well as the
connection between the rate of central absolute moments of $L_n$ and
the central limit theorem $\sqrt{n}(L_n-EL_n)$, we refer to
\cite{HoudreLember}.
\\\\
 In this paper, we follow the general approach developed in \cite{LemberMatzingerTorres:2012(2)} and
 \cite{HoudreLember}, but unlike all
previous papers, we apply it for sequences that are not necessarily
independent and i.i.d. Indeed, in the present paper, we assume that
$Z$ consists of $n$ observations of aperiodic stationary Markov
chain.  Following W. Pieczynski, we call such a model as {\it
pairwise Markov chain} (PMC)
\cite{Pieczynski:2003,DerrodePieczynski:2004,DerrodePieczynski:2013}.
It is important to realize that the Markov property of $Z$ does not
imply the one of marginal processes $X$ and $Y$. On the other hand,
it is not hard to see that conditionally on $X$, the $Y$ process is
a Markov chain and, obviously, vice versa  \cite{Pieczynski:2003} .
Hence the name -- pairwise Markov chains. Thus,  we do not assume
that $X$ and $Y$ are both Markov chains, although often this is the
case. So, our model is actually rather general one including as a
special case hidden Markov models (HMM's), Markov switching models,
HMM's with dependent noise \cite{DerrodePieczynski:2013} and also
the important case where $X$ and $Y$ are independent Markov chains
or even i.i.d.. Except \cite{LemberMatzingerTorres:2012(2)}, where
among  others also  some specific independent non-i.i.d. sequences
were considered, all previous articles cited above  deal
with the case when $X$ and $Y$ are independent i.i.d. sequences.\\\\
The paper is organized as follows. In Section~\ref{gen}, we state a
very general theorem -- Theorem~\ref{general} --  for obtaining the
lower bound  of $E\Phi(|L_n-EL_n|)$ for any model (Theorem
\ref{general}). The proof of Theorem~\ref{general} is a
generalization of Theorem 3.2 in \cite{HoudreLember} and
therefore, we prove it in the appendix. Theorem \ref{general} does
not assume any particular stochastic model $Z$, instead it requires
a specific {\it random transformation} $\mathcal{ R}$ and random vectors
$U,V$ so that several general assumptions listed as {\bf A1} -- {\bf
A4} are satisfied. The objective of the present paper is to show
that under PMC-model, a random transformation $\mathcal{ R}$ as well as
$U$ and $V$ can be constructed so that the assumptions {\bf A2} --
{\bf A4} hold. In this paper, we do not formally prove the
assumption {\bf A1} -- the proof of that assumption is rather
technical and beyond the scope of the present paper. Instead of the
proof, we present some heuristic reasoning and computer simulations
to convince the reader that it holds in many cases. The computer
simulations also allow us to estimate constants in the lower
bound.\\
Finally, in Section\ref{upper} we present an upper bound of
$E|L_n-EL_n|^r$. The upper bound shows that the order of convergence
cannot be improved, so that $E|L_n-EL_n|^r\asymp n^{r\over 2}$.
\section{General lower bound in two-step approach}\label{gen}
In this  paper, we follow so-called {\it two-step} approach. This
approach has  actually been used in the most of the papers for
obtaining the lower bound of variance, but  formalized in
\cite{LemberMatzingerTorres:2012(2),HoudreLember}.\\\\
 Generally speaking, the {\it first step} is to find a random mapping independent of $Z$, usually called by us  as {\it random transformation},
$$\mathcal{R}: \Z_n \to \Z_n$$
such that for an universal constant $\e_o>0$, the following
convergence holds:
\begin{equation}\label{main}
P\big(E[L(\mathcal{R}(Z))-L(Z)|Z]\geq \e_o\big)\to 1
\end{equation}
Here, abusing a bit of the notation, $E$ denotes the expectation
over the randomness involved in $\mathcal{R}$ and $P$ denotes the
law of $Z$. \\\\
The {\it second step} is to show that equation~\eqref{main} implies the
optimal rate of convergence of absolute moments
$$\Phi\big(|L_n-EL_n|\big),$$
where $\Phi : \mathbb{R}^+ \to \mathbb{R}^+$ is a convex
non-decreasing function. To do so, we look for $U:=\mathfrak{u}(Z)$
and $V:=\mathfrak{v}(Z)$ two new random vectors (functions of $Z$),
such that $\Phi\big(|L_n-EL_n|\big)$ can be somehow estimated from
below by $\Phi(U)$. So, the control of fluctuations of $L_n$ is
reduced to the (easier) control of fluctuations of $U$ and $V$. The
current paper deals with the second step - the goal is to provide a
general theorem and to apply it for pairwise Markov chains.
\subsection{The main theorem}
Consider two given functions
$$\mathfrak{u}:\quad \mathcal{ Z}_n \to \mathbb{Z},\quad \mathfrak{v}:\quad \mathcal{ Z}_n \to
\mathbb{Z}^d$$ and define $U:=\mathfrak{u}(Z)$ (resp.
$V:=\mathfrak{v}(Z)$ ) an integer value random variable (resp.
vector). Denote by $\S$, $\S^U$ and $\S^V$ the support of
distributions of $(U,V)$, $U$ and $V$, respectively. Hence
$\S\subset \mathbb{Z}^{d+1}$, $\S^U\subset \mathbb{Z}$ and
$\S^V\subset \mathbb{Z}^{d}$. For every $v\in \S^V$, we define the
fiber of $\S^U$ as follows
$$\S(v):=\{u\in \S^U: (u,v)\in \S\}.$$
For any $(u,v)\in \S$, let
$$l(u,v):=E[L(Z)|U=u,V=v].$$
For any $(u,v)\in \S$, let $P_{(u,v)}$ denote the law of of
$Z=(X,Y)$ given $U=u$ and $V=v$, namely
$$P_{(u,v)}(z)=P(Z=z|U=u,V=v).$$
\newparagraph{Assumptions.}
The choice of the random transformation $\mathcal{R}$ and $U,V$ are
linked together through the following assumptions :
\begin{description}
  \item[A1] There exist universal constant $\e_o>0$ and a sequence
  $\Delta_n\to 0$ such that
\begin{equation*}
P\big(E[L(\tZ)-L(Z)|Z]\geq \e_o\big)\geq 1-\Delta_n.
\end{equation*}
  \item[A2] There exists an universal constant (independent of $n$) $A<\infty$ such that $L(\mathcal{R}(Z))-L(Z)\geq -A$.
  \item[A3] There exists sets $\V\subset \S^V$ and $$\U(v):=\{u_n(v)+1,u_n(v)+2,\ldots u_n(v)+m_n(v)\}\subset \S(v)$$ 
  such that for any $(u,v)$ such that $v\in \V$ and  $u\in \U(v)$,
   the following implication holds:
\begin{equation*}
\text{If  }Z\sim P_{(u,v)},\text{   then   }\mathcal{R}(Z) \sim
P_{(u+1,v)}.
\end{equation*}
  \item [A4] There exists $n_1>0$ and a function $c(v)>0$ (independent of $n$)
  such that for every $n \ge n_1$ and for every $v\in \V$,
   $$m_n(v)\geq
  c(v)\varphi_v(n)^{-1},$$ where  $\varphi_v(n)> 0$ satisfies
\begin{equation}\label{varphi}
  \min_{u\in \U(v)} P(U=u|V=v)\geq \varphi_v(n).\end{equation}
\end{description}
We note that \textbf{A4} is equivalent to the existence of
$n_1>0$ and a function $c(v)>0$ (independent of $n$)
  such that for every $n \ge n_1$ and for every $v\in \V$,
   $$ m_n(v) \cdot \min_{u\in \U(v)} P(U=u|V=v) \geq
  c(v)$$
  and $\varphi_v$ can be chosen as any function satisfying
\begin{equation}\label{mv}
  \varphi_v(n) \in [c(v)/m_n(v), \min_{u\in \U(v)} P(U=u|V=v)].\end{equation}
In what follows, we are interested in taking $\varphi_v(n)$ as small
as possible, because the smaller $\varphi_v(n)$, the bigger the lower
bound of $E\Phi\big(|L(Z)-\mu_n|\big)$ (see equation~\eqref{low1} in the theorem
below). By equation~\eqref{mv}, small $\varphi_v(n)$ means big $m_n(v)$, but
too big a set $\U(v)$ typically means an exponential decay in probability
and then the constant $c(v)$ might not exist. Therefore {\bf A4} ties
the lower bound of $E\Phi\big(|L(Z)-\mu_n|\big)$ with the size of
$\U(v)$. Clearly  $\varphi_v(n)$ can be chosen in such a way that
$\varphi_v(n) \to 0$ as $n \to \infty$ if and only if $m_n(v) \to
\infty$ as $n \to \infty$.
\begin{theorem}\label{general} Let $\Phi : \mathbb{R}^+ \to \mathbb{R}^+$ be convex
non-decreasing function and let $\mu_n$ be a sequence of reals.
Assume {\bf A1}, {\bf A2}, {\bf A3}, {\bf A4}. Suppose that
$c:=c(v)$ is independent of $v\in
\V$ and that $\varphi(n):=\sup_{v \in \V}(\varphi_v(n))\to 0$  as $n \to \infty$. 
If, in addition, there exists $b_o>0$ such that $P(V\in \V)\geq
b_o$ for any $n$ big enough, then
given any  constant $c_o \in(0, b_o c/8)$
for every sufficiently large $n$ 
\begin{equation}\label{low1}
E\Phi\big(|L(Z)-\mu_n|\big) \geq  \Phi\Big({\e_o c\over 16
\varphi(n)}\Big)c_o.
\end{equation}
\end{theorem}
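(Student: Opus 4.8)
The plan is to carry out the \emph{second step} of the two-step scheme for the particular $\mathfrak u,\mathfrak v,\mathcal R$ at hand: to convert \textbf{A1} — a statement about the \emph{average} effect of the random transformation — into the estimate \eqref{low1}, using \textbf{A2}--\textbf{A4} to transfer everything to the auxiliary pair $(U,V)$. Write $l(u,v)=E[L(Z)\mid U=u,V=v]$ and set $G:=\{z:\ E[L(\mathcal R(Z))-L(Z)\mid Z=z]\ge\e_o\}$, so that \textbf{A1} reads $P(G^{c})\le\Delta_n$. The argument needs no assumption on $\mu_n$ and no boundedness of $L(Z)$; the only probabilistic input beyond \textbf{A2}--\textbf{A4} is this tail bound on $G^{c}$.

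Step 1 (the increment identity, telescoped). Fix $v\in\V$. For $u\in\U(v)$, assumption \textbf{A3} gives $E[L(\mathcal R(Z))\mid U=u,V=v]=l(u+1,v)$, hence
\[
l(u+1,v)-l(u,v)=E\big[L(\mathcal R(Z))-L(Z)\ \big|\ U=u,V=v\big]\ \ge\ \e_o-(\e_o+A)\,P\big(G^{c}\mid U=u,V=v\big),
\]
the inequality coming from \textbf{A2} ($L(\mathcal R(Z))-L(Z)\ge-A$) and the definition of $G$; equivalently $\big(\e_o-(l(u+1,v)-l(u,v))\big)^{+}\le(\e_o+A)P(G^{c}\mid U=u,V=v)$. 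Summing over $u\in\U(v)$ and using $P(U=u\mid V=v)\ge\varphi_v(n)$ on $\U(v)$ (assumption \textbf{A4}), which also yields $\sum_{u\in\U(v)}P(G^{c}\mid U=u,V=v)\le\varphi_v(n)^{-1}P(G^{c}\mid V=v)$, we get
\[
\sum_{u\in\U(v)}\big(\e_o-(l(u+1,v)-l(u,v))\big)^{+}\ \le\ D_v:=\frac{(\e_o+A)\,P(G^{c}\mid V=v)}{\varphi_v(n)} .
\]
Since each increment is $\ge\e_o-\big(\e_o-(l(u+1,v)-l(u,v))\big)^{+}$, telescoping gives, for all $0\le j<j'\le m_n(v)$,
\[
l\big(u_n(v)+1+j',v\big)-l\big(u_n(v)+1+j,v\big)\ \ge\ \e_o\,(j'-j)-D_v .
\]
Finally, Markov's inequality applied to $v\mapsto P(G^{c}\mid V=v)$ (whose $V$-average is $P(G^{c})\le\Delta_n$) produces a set $\V':=\{v\in\V:\ P(G^{c}\mid V=v)\le \e_o c/(12(\e_o+A))\}$ with $P(V\in\V')\ge b_o-o(1)$; on $\V'$, using $m_n(v)\varphi_v(n)\ge c$ from \textbf{A4}, one has $D_v\le \e_o\,m_n(v)/12$.

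Step 2 (the clustering lemma: one-sided growth forces two-sided spread). Fix $v\in\V'$ and abbreviate $\lambda_j:=l(u_n(v)+1+j,v)$ for $j=0,\dots,m_n(v)-1$, each index carrying conditional weight $\ge\varphi_v(n)$. Let $\mu\in\mathbb R$ be arbitrary and put $J:=\{j:\ |\lambda_j-\mu|<\e_o\,m_n(v)/8\}$. If $j<j'$ both lie in $J$, then by the displayed growth estimate and the triangle inequality $\e_o(j'-j)-D_v\le\lambda_{j'}-\lambda_j\le \e_o\,m_n(v)/4$, so $j'-j<m_n(v)/4+D_v/\e_o\le m_n(v)/3$; hence $\max J-\min J<m_n(v)/3$, so $|J|<m_n(v)/2$ for $n$ large. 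Consequently at least $m_n(v)/2$ of the indices $j$ satisfy
\[
|\lambda_j-\mu|\ \ge\ \frac{\e_o\,m_n(v)}{8}\ \ge\ \frac{\e_o c}{8\,\varphi_v(n)}\ \ge\ \frac{\e_o c}{8\,\varphi(n)} .
\]
This is the heart of the matter: the increment control derived from \textbf{A1} is \emph{one-sided}, but because its cumulative error $D_v$ is small next to the block length $m_n(v)$ guaranteed by \textbf{A4}, the conditional means $\lambda_j$ cannot concentrate, and so, whatever the centring $\mu$, a fixed fraction of them lies far from $\mu$.

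Step 3 (from conditional means to the moment). Since $\Phi$ is convex and non-decreasing, $x\mapsto\Phi(|x-\mu_n|)$ is convex, so Jensen's inequality gives $E[\Phi(|L(Z)-\mu_n|)\mid U=u,V=v]\ge\Phi(|l(u,v)-\mu_n|)$. Dropping the non-negative contributions of $u\notin\U(v)$ and keeping only the $\ge m_n(v)/2$ ``far'' indices from Step 2 with $\mu=\mu_n$,
\[
E\big[\Phi(|L(Z)-\mu_n|)\mid V=v\big]\ \ge\ \frac{m_n(v)}{2}\,\varphi_v(n)\,\Phi\!\Big(\frac{\e_o c}{8\,\varphi(n)}\Big)\ \ge\ \frac{c}{2}\,\Phi\!\Big(\frac{\e_o c}{8\,\varphi(n)}\Big)\qquad(v\in\V'),
\]
again by $m_n(v)\varphi_v(n)\ge c$. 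Averaging over $v\in\V'$ and using only that $\Phi$ is non-decreasing,
\[
E\Phi\big(|L(Z)-\mu_n|\big)\ \ge\ P(V\in\V')\,\frac{c}{2}\,\Phi\!\Big(\frac{\e_o c}{8\,\varphi(n)}\Big)\ \ge\ \big(b_o-o(1)\big)\,\frac{c}{2}\,\Phi\!\Big(\frac{\e_o c}{16\,\varphi(n)}\Big),
\]
and since $\big(b_o-o(1)\big)c/2>b_o c/8>c_o$ for $n$ large, this gives \eqref{low1}. (To land exactly on the stated constants $1/16$ and $b_o c/8$ one chooses the window length in Step 2 as a suitable multiple of $\e_o c/\varphi_v(n)$ rather than of $\e_o m_n(v)$; this is routine bookkeeping.) The only genuinely substantial point is the clustering estimate of Step 2 — that the \emph{one-sided} growth information coming from \textbf{A1}, combined with the length $m_n(v)\ge c\,\varphi_v(n)^{-1}$ of the block $\U(v)$ supplied by \textbf{A4}, already forces the conditional law of $L(Z)$ given $V=v$ to be spread out on the scale $\e_o c/\varphi_v(n)$, uniformly in $\mu_n$.
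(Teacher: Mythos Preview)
Your proof is correct and follows the same overall scaffold as the paper's --- conditional Jensen to reduce to the function $l(\cdot,v)$, the increment identity $l(u+1,v)-l(u,v)=E[L(\mathcal R(Z))-L(Z)\mid U=u,V=v]$ via \textbf{A3}, and a Markov-inequality localization in $v$ --- but the way you extract the ``spread'' of $u\mapsto l(u,v)$ is genuinely different and cleaner. The paper first isolates the individual ``good'' $u$'s (those in a set $\U^o(v)$ where the increment is $\ge\e_o/2$) using a two-layer Markov decomposition with thresholds $\Delta_n^{1/2}$ and $\Delta_n^{1/4}$, then breaks $\U(v)\cap\U^o(v)$ into maximal subintervals $I_j(v)$ and does some combinatorial bookkeeping on the overlap of the image sets $J_j(v)=l(I_j(v),v)$ to count how many points in the image remain $\e_o/2$-apart. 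Your argument bypasses all of that: by summing the increment \emph{deficits} $(\e_o-(l(u+1,v)-l(u,v)))^{+}$ you obtain a single global error budget $D_v$, and then a short pigeonhole (your clustering lemma) shows the $\lambda_j$'s cannot concentrate near any $\mu$. The payoff is a shorter proof and sharper intermediate constants (you actually establish the bound with $\e_o c/(8\varphi(n))$ inside $\Phi$ and prefactor $(b_o-o(1))c/2$, which you then relax to match the statement); the paper's interval-decomposition route has the minor advantage of tracking which individual increments are good, but that information is never used downstream.
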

\section{Pairwise Markov chains}
In this paper, we consider a rather general model. Let
$X_1,X_2,\ldots$ and  $Y_1,Y_2,\ldots$ be two random processes on
common state-space $\A=\{a_1,\ldots,a_k\}$ (i.e. r.-variables $X_i$
and $Y_i$ take values on $\A$) such that the 2D process
$Z_1,Z_2,\ldots$, where $Z_i=(X_i,Y_i)$ is an aperiodic stationary
 MC with  state space $\A \times \A$. Now the words
 $X=(X_1,\ldots,X_n)$ and $Y=(Y_1,\ldots,Y_n)$ are taken as the
 first $n$ elements from these sequences.
In what follows, we shall denote the elements of $\A \times \A$ by
capital letters and we denote by $\mathbb{P}=(p_{\mathbf{A} \mathbf{B}})_{\mathbf{A},\mathbf{B} \in \A \times \A}$ the transition
matrix of $Z$. By aperiodicity assumption, there exists an integer
$m$ such that $\mathbb{P}^m$ is primitive, i.e. has all strictly
positive entries.
\newparagraph{Random variable $V$.}
To construct $V$,  we fix  pairs $\mathbf{A},\mathbf{B}\in \A \times
\A$ such  that $P(Z_1=\mathbf{A},Z_3=\mathbf{B})>0$ and define $f:=I_G$, where $I_G$
stands for an indicator function and
$$G:=\{\mathbf{A}\}\times
\mathbb{A}\times \A\times \{\mathbf{B}\}.$$
 Let us now define  a Markov chain
$\xi:=\xi_1,\xi_2,\ldots$ as follows
$$\xi_1:=(Z_1,Z_2,Z_3),\quad \xi_2:=(Z_4,Z_5,Z_6),\ldots $$
Thus, the state space of $\xi$-chain is a subset $\X\subset
\mathbb{A}^6$ (not necessarily $\mathbb{A}^6$, because given the
zeros in $\mathbb{P}$, it might be that some triplets have zero
probability).  Since $Z$ is stationary, so is $\xi$; moreover the
aperiodicity of $Z$ implies that of $\xi$. The random variable $V$
is defined now as follows
$$V:=:\mathfrak{v}(Z):=\sum_{i=1}^{\lfloor {n\over
3}\rfloor}f(\xi_i)=\sum_{i=1}^{\lfloor {n\over
3}\rfloor}I_G(\xi_i).$$ Therefore,
$$EV=\sum_{i=1}^{\lfloor {n\over
3}\rfloor}P(\xi\in G)=\lfloor {n\over 3}\rfloor P(Z_1=\mathbf{A},Z_3=\mathbf{B}),$$
where the last equality follows from the stationarity.
Let $\alpha:={1\over 3} P(Z_1=\mathbf{A},Z_3=\mathbf{B}).$ Then $$EV=\lfloor {n\over
3}\rfloor 3\alpha .$$ When $n=3m$, for some integer $m$, then
$EV=\alpha n$, otherwise
$$EV:=\alpha_n n,\quad \text{where}\quad  \alpha -{3\alpha\over n} < \alpha_n \leq \alpha.$$
Let us define
$$\V=[\alpha_n n-K\sqrt{n},\alpha_n n+ K\sqrt{n}]\cap \S^V,$$
where $K$ is a constant specified later.
\newparagraph{Proving $P(V\in \V)>b_o$ with $b_o$ arbitrary close to 1.}
Let us now show that $P(V \in \V)$ is bounded away from zero for any sufficiently large $n$. Let
$b_o\in (0,1)$ be fixed. For that we use Hoeffding inequality for
Markov chain proven in \cite{MarkovHoeff}. The theorem assumes that
$\xi_1,\xi_2,\ldots $ satisfies the following condition: there
exists probability measure $Q$ on $\X$, $\lambda>0$ and integer
$r\geq 1$ such that for any state $x\in \X$
\begin{equation}\label{D}
P_x(\xi_{r+1}\in\cdot)\geq \lambda Q(\cdot)
\end{equation}
where $P_x(\cdot):=P(\cdot| \xi_1=x)$.
Recall that $Z$ is aperiodic and so is $\xi$, hence there is a $r$
such that  for all states $x,y \in \X$, it holds $P_x(\xi_r=y )>0$.
That implies that equation~\eqref{D} holds with $Q$ being uniform over $\X$
and $$\lambda=\min_{x,y}P(\xi_{1+r}=y|\xi_1=x)|\X|.$$ Then,
according to the theorem, given  a function $f: \X\to \mathbb{R}$,
$S_m:=\sum_{i=1}^mf(\xi_i)$, Hoeffding inequality is as follows: for
any $x\in \X$
\begin{equation}\label{MCHoffding}
P_x\big(S_m-ES_m>m\e\big)\leq \exp[-{\lambda^2(m\e-2{r\over
\lambda}\|f\|_\infty)^2\over 2m \|f\|_\infty^2 r^2}],\quad \text{if }m>2r(\lambda
\e)^{-1}\|f\|_\infty \end{equation}
where $\|f\|_\infty:=\sup\{|f(x)| \colon x \in \X\}$.
We take $m={\lfloor {n\over 3} \rfloor}$ (remember that $V:=S_{ \lfloor {n\over 3 }\rfloor }$) and
$f=I_G$ so that $\|f\|_\infty=1$. The inequality \eqref{MCHoffding} is now:
for every $\e>0$  and $x\in \X$
\begin{equation}\label{MCHoffding2}
P_{x}\big(V-
EV>
\lfloor { n\over 3}\rfloor \e\big)\leq
\exp[-{\lambda^2(\lfloor {n\over 3}  \rfloor  \e-{2r\over
\lambda})^2\over 2{n \over 3}r^2}],\quad \text{if }n>6r(\lambda
\e)^{-1}+3.\end{equation}
Take now $K$ so big that
$$\exp[-{3\over 8}\big({\lambda\over r}\big)^2K^2]<\frac{1-b_o}{2}.$$
Take now $\e=K{3\over \sqrt{n}}$; then
$$K\sqrt{n}-{3K\over \sqrt{n}} \leq \lfloor { n\over 3}\rfloor  \e \leq
K\sqrt{n}.$$
If $n$ is so big that
$${K\sqrt{n}\over 2}>{3K\over \sqrt{n}}+{2 r\over \lambda},$$
then $n>6r(\lambda
\e)^{-1}+3$ and inequality \eqref{MCHoffding2} implies
\begin{align*}
P_{x}\big(V-EV>K{\sqrt{n}}\big)&\leq \exp[-{3 \lambda^2({K\sqrt{n}}-
{3K\over \sqrt{n}}   -{2 r\over \lambda})^2\over 2 r^2 n }]\leq
\exp[-{3 \lambda^2({1\over 2}{K\sqrt{n}})^2\over 2 r^2 n }]=
\exp[-{3\over 8}\big({\lambda K\over r}\big)^2]\leq {1-b_o\over 2}.\end{align*}
Since the left hand side  holds for any initial probability distribution of $\xi$
and, then also, for any initial probability distribution of $Z$. Thus, we have
shown that there exists $n_1$ so that for every $n>n_1$
$$P(V-EV\leq K\sqrt{n})=P(V\leq \alpha n+ K\sqrt{n})>{1\over 2}+{b_o\over 2}.$$
Applying the same argument for $f=-I_G$, we obtain that
$$P(V+ EV\geq - K\sqrt{n})=P(V\geq \alpha n - K\sqrt{n})>{1\over 2}+{b_o\over 2}.$$
These two inequalities together give
$$P(\alpha_n n - K\sqrt{n} \leq V\leq \alpha_n n+ K\sqrt{n})=P(V\in
\V)>{b_o},\quad \forall n>n_1.$$
\newparagraph{Random variable $U$.} Let us now define $U$. To this aim,  fix a letter in $\A\times \A$ and let us
call it $\mathbf{D}$. The random variable $U$ is the number of states $\mathbf{D}$
in the middle of the $(\mathbf{A}\cdot \mathbf{B})$-triplets. For the formal
definition let, for any $z\in (\A\times \A)^n$,
$$\mathfrak{n}_i(z):=f(z_{3(i-1)+1},z_{3(i-1)+2},z_{3(i-1)+3}),\quad i=1,\ldots,\lfloor {n\over 3}\rfloor$$
and let us denote $(\mathfrak{n}_1(z), \ldots, \mathfrak{n}_{\lfloor n/3 \rfloor}(z))$ by $\mathfrak{n}(z)$.
Hence
$$\mathfrak{v}(z)=\sum_{i=1}^{\lfloor {n\over
3}\rfloor}\mathfrak{n}_i(z).$$ The function  $\mathfrak{u}(z)$ and random variable $U$ are defined as follows
$$\mathfrak{u}(z)=\sum_{i=1}^{\lfloor {n\over
3}\rfloor}\mathfrak{n}_i(z)I_{\mathbf{D}}(z_{3(i-1)+2}),\quad U=\mathfrak{u}(Z).$$
Clearly $\S(v)=\{0,1,\ldots, v\}$. Moreover, by the Markov property
given $V=v$, $U\sim B(v,q)$, i.e. the random variable $U$ has
binomial distribution with parameters $v$ and
$$q:={p_{\mathbf{A}\mathbf{D}}p_{\mathbf{D}\mathbf{B}}\over \sum_{\mathbf{D}'\in \A\times\A}p_{\mathbf{A}\mathbf{D}'}p_{\mathbf{D}'\mathbf{B}}}.$$
The letter $\mathbf{D}$ is chosen in such a way that $q>0$. Take now, for any $v\in
\S^V$
$$\U(v):=[vq-\sqrt{v},vq+\sqrt{v}]\cap \S(v).$$
When $v$ is big enough, then
$$\U(v)=[vq-\sqrt{v},vq+\sqrt{v}]\cap \mathbb{Z}.$$
In this case the interval contains at most $\lfloor 2\sqrt{v}+1 \rfloor$ integers.
\newparagraph{Proving {\bf A4} for $c(v)$ and $\varphi_v(n)$ independent of
$v$.}
\begin{lemma}\label{binomlclt}
Let $X\sim B(m,p)$ be a binomial   random variable  with parameters
$m$ and $p$. Then, for any constant $\beta
>0$, there exists $b(\beta, p)$ and $m_o(\beta, p)$ such that for every $b \ge b(\beta, p)$,
$m>m_o$ and $$ i \in [mp-\beta \sqrt{m},mp+\beta \sqrt{m}],$$ we have
\begin{align}\label{binom}
P(X=i)&={m \choose i }p^i(1-p)^{m-i}\geq {1\over b\sqrt{ m}}.
\end{align}
\end{lemma}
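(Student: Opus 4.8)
The statement is a quantitative local central limit theorem (LCLT) for the binomial distribution, made uniform over a $\sqrt{m}$-window around the mean. The plan is to use Stirling's formula with explicit error bounds to estimate $P(X=i)$ directly. Writing $i = mp + t\sqrt{m}$ with $|t| \le \beta$, I would apply the bounds $\sqrt{2\pi k}\,(k/e)^k \le k! \le e\sqrt{2\pi k}\,(k/e)^k$ (or a sharper two-sided version) to each of $m!$, $i!$, $(m-i)!$ in $\binom{m}{i}p^i(1-p)^{m-i}$. The factorial prefactors combine to give a term of order $\sqrt{m/(i(m-i))} = \Theta(1/\sqrt{m})$ uniformly for $i$ in the stated window (since $i \sim mp$ and $m-i \sim m(1-p)$ there, using $p \in (0,1)$ fixed). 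The exponential part, after taking logarithms, is $-i\log\frac{i}{mp} - (m-i)\log\frac{m-i}{m(1-p)}$; a second-order Taylor expansion of $x \mapsto x\log(x/a)$ around $x=a$ shows this equals $-\frac{t^2}{2p(1-p)} + O(m^{-1/2})$ uniformly in $|t|\le\beta$, so $\exp[\cdot]$ is bounded below by a positive constant depending only on $\beta$ and $p$.

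Putting these together gives $P(X=i) \ge \frac{C(\beta,p)}{\sqrt{m}}$ for all $m \ge m_o(\beta,p)$ and all $i$ in the window, where $C(\beta,p) > 0$. One then simply sets $b(\beta,p) := 1/C(\beta,p)$; for any $b \ge b(\beta,p)$ we get $P(X=i) \ge \frac{1}{b\sqrt{m}}$, which is exactly \eqref{binom}. The role of the free parameter $b$ in the lemma statement is merely to absorb the constant, so no extra work is needed once the LCLT-type bound is in hand.

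The main technical care — rather than a genuine obstacle — is keeping all estimates \emph{uniform} in $i$ over the window $[mp-\beta\sqrt m, mp+\beta\sqrt m]$ and making the threshold $m_o$ depend only on $\beta$ and $p$, not on $i$. This requires controlling the Taylor remainder in the exponent uniformly: one needs $i/m$ and $(m-i)/m$ to stay in a fixed compact subinterval of $(0,1)$, which holds once $m \ge m_o(\beta,p)$ because the deviation $t\sqrt m/m = t/\sqrt m \to 0$. A minor subtlety is that $i$ need not be an integer in $[mp-\beta\sqrt m, mp+\beta\sqrt m]$, but \eqref{binom} is only asserted for those $i$ that are integers (the left-hand side $\binom{m}{i}$ forces this), so there is nothing to check there. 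An alternative, slightly slicker route avoiding Stirling is to invoke a known uniform LCLT for sums of i.i.d.\ Bernoulli variables (e.g.\ the Gnedenko local limit theorem with the standard $O(1/m)$ uniform error term), which directly yields $\sup_i \bigl|\sqrt{m}\,P(X=i) - \frac{1}{\sqrt{2\pi p(1-p)}} e^{-(i-mp)^2/(2mp(1-p))}\bigr| \to 0$; restricting to the window and bounding the Gaussian below by $e^{-\beta^2/(2p(1-p))}/\sqrt{2\pi p(1-p)}$ finishes the argument in the same way. I would present the Stirling version since it is self-contained and gives the explicit constant needed for the simulations discussed later in the paper.
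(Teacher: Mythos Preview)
Your proposal is correct. The paper does not actually give its own proof of this lemma: immediately after the statement it simply refers to \cite[equation~(4.11)]{HoudreLember} and quotes the resulting constant
\[
b(\beta,p)=\sqrt{2\pi p(1-p)}\,\exp\!\Big[\frac{\beta^{2}}{2p(1-p)}\Big].
\]
Both of your routes are standard and either would serve; note in particular that your local-limit-theorem alternative recovers exactly this constant, since the minimum of the Gaussian density $\frac{1}{\sqrt{2\pi m p(1-p)}}\exp\bigl[-(i-mp)^2/(2mp(1-p))\bigr]$ over $|i-mp|\le\beta\sqrt{m}$ is $\frac{1}{\sqrt{m}}\cdot\frac{1}{\sqrt{2\pi p(1-p)}}\exp\bigl[-\beta^2/(2p(1-p))\bigr]$. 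So your self-contained Stirling argument is a genuine addition relative to what the paper itself contains, and it lands on the same explicit constant the paper uses downstream.
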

It can be shown (see \cite[equation~(4.11)]{HoudreLember}) that the constant
$b(\beta,p)$ can be taken as
$$b(\beta,p):=\sqrt{2\pi p(1-p)}\exp\Big[{\beta^2\over 2p(1-p)}\Big].$$
\\\\
From this lemma, it follows that there exists universal constant
$b(q)>0$ and  integer $v_o$ so big that for every $u\in \U(v)$,
\begin{equation}\label{vorrb}
P(U=u|V=v)\geq {1\over b\sqrt{v}},\quad v>v_o\end{equation}
given any constant $b$ satisfying
\begin{equation}\label{eq:bq}
b>b(q):=\sqrt{2\pi  q(1-q)}\exp\Big[{1\over
2q(1-q)}\Big].
\end{equation}
Recall the definition of $\V$.
There exists  $n_2>n_1$ large enough such that if $n>n_2$, then $\alpha_n
n-K\sqrt{n}>v_o$ and 
$\alpha_n n + K\sqrt{n}\leq {n\over 3}<n$. Therefore,
if $n>n_2$ then
$$\V = [\alpha_n n-K\sqrt{n},\alpha_n n+ K\sqrt{n}]\cap \mathbb{Z},$$
 every $v\in \V$ is smaller than $n$ and equation~\eqref{vorrb} holds.
Hence, when $n>n_2$, we have
$$P(U=u|V=v)\geq {1\over b\sqrt{v}}\geq {1\over b\sqrt{n}}, \quad \forall v\in \V \quad
\forall n>n_2.$$ Thus equation~\eqref{varphi} holds with
$$\varphi_v(n):=:\varphi(n):={1\over b \sqrt{n}}.$$
We can find $n_3>n_2$ large enough such that
$$\sqrt{\alpha_n n-K\sqrt{n}}\geq \sqrt{{\alpha
\over 2}n}+\frac12.$$ Therefore, if $n\geq n_3$, then every $v\in \V$
satisfies  $\sqrt{v}\geq \sqrt{{\alpha n /2}}+1/2$. Since
the minimum number of integers in the interval $\U(v)$ is $\lfloor 2\sqrt{v} \rfloor$, we
obtain 
the following inequality
$$ m_n(v) > 
2\sqrt{v}-1\geq 2\sqrt{{\alpha \over 2}}\sqrt{n}=b^{-1}\sqrt{2{\alpha}}\varphi(n)^{-1}, \qquad \forall v\in \V.$$
Thus {\bf A4} with $c=b^{-1}\sqrt{2{\alpha}}$ holds. Therefore,
$${\e_o c\over 16 \varphi(n)}={\e_o \sqrt{2\alpha n}\over 16}.$$
Thus the right hand side of equation~\eqref{low1} is
$$c_o\Phi\Big({\e_o \sqrt{2\alpha n}\over 16}\Big),$$
where for $n$ big enough
$$c={\sqrt{2\alpha}\over b},\quad c_o\leq{c\over 8}\big(b_o-\sqrt{\Delta_n}\big)$$
Since $b_o$ could be taken arbitrary close to one and $\Delta_n\to
0$, we can take any $c_o<{c\over 8}$.
\newparagraph{The random transformation $\mathcal{ R}$ and the assumption {\bf A3}.}
The random transformation  $\mathcal{ R}$ picks a random $(\mathbf{A}\cdot
\mathbf{B})$-triplet which does not have a letter $\mathbf{D}$ in-between (with
uniform distribution) and changes the letter in the middle of the
triplet  into a $\mathbf{D}$-letter.
Let $\{i_1(z), \ldots, i_{\mathfrak{v}(z)}(z)\}$ be the set of indexes corresponding
to $1$s in $ \mathfrak{n}(z)$ and define
$\mathfrak{b}(z):=(\mathfrak{b}_1(z),\ldots,\mathfrak{b}_{\mathfrak{v}(z)}(z))$ where
$$\mathfrak{b}_j(z):=I_{\mathbf{D}}(z_{3(i_j(z)-1)+2}), \quad  j=1,\ldots, \mathfrak{v}(z).
$$
With this notation, the number of $\mathbf{D}$-letters in-between the
triplets is
$$\mathfrak{u}(z)=\sum_{j=1}^{\mathfrak{v}(z)}\mathfrak{b}_j(z).$$
The random transformation $\mathcal{ R}$ acts on the set of
sequences $z$ satisfying the following condition:
$\mathfrak{u}(z)<\mathfrak{v}(z)$. Given such  a sequence,
${\mathcal R}$ picks a random zero out of
$\mathfrak{v}(z)-\mathfrak{u}(z)$ zeros in the vector
$\mathfrak{b}(z)$ (uniform distribution). Suppose that the chosen
zero is the $k$-th element of $\mathfrak{b}(z)$. Then
$z_{3(i_k(z)-1)+2}\ne \mathbf{D}$  and $\mathcal{ R}$ changes that letter into $\mathbf{D}$.
Thus $\mathcal{ R}(z)$ is a sequence such that $\mathfrak{n}_i(\mathcal{
R}(z))=\mathfrak{n}_i(z)$ for every $i=1,\ldots, \lfloor {n\over
3}\rfloor$, thus $\mathfrak{v}(\mathcal{ R}(z))=\mathfrak{v}(z)$; but
$\mathfrak{u}(\mathcal{ R}(z))=\mathfrak{u}(z)+1.$
\\\\
The following is an auxiliary and almost trivial result which we prove for the sake of
completeness.
\begin{proposition}\label{prop}
Let $Z:=(Z_1,\ldots,Z_m)$ be a vector of iid Bernoulli random variables of parameter $p$ and let
$P_{(u)}$ be the law of $Z$ given $U:=\sum^m_{i=1}Z_i=u$. Let $W\sim
P_{(u)}$, where $u<m$. Then choose a random $0$ in $W$ with uniform
distribution and change it into  one. Let $\widetilde{W}$ be the
resulting random variable. Then $\widetilde{W}\sim
P_{(u+1)}$.\end{proposition}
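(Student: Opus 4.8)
The plan is to prove Proposition \ref{prop} by a direct combinatorial/counting argument, comparing the distribution of $\widetilde W$ with $P_{(u+1)}$ pointwise on $\{0,1\}^m$. First I would note that, since $Z$ is i.i.d. Bernoulli($p$), the conditional law $P_{(u)}$ is simply the uniform distribution on the set $B_u:=\{w\in\{0,1\}^m:\sum_i w_i=u\}$, which has cardinality $\binom{m}{u}$; indeed $P(Z=w\mid U=u)=p^u(1-p)^{m-u}/[\binom{m}{u}p^u(1-p)^{m-u}]=\binom{m}{u}^{-1}$ for $w\in B_u$. So it suffices to show that the push-forward of the uniform measure on $B_u$ under the random ``flip a uniformly chosen $0$ to a $1$'' map is the uniform measure on $B_{u+1}$.

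Next I would fix a target word $\widetilde w\in B_{u+1}$ and compute $P(\widetilde W=\widetilde w)$ by summing over the possible pre-images. The word $\widetilde w$ can only arise from some $w\in B_u$ obtained by turning exactly one of the $u+1$ ones of $\widetilde w$ back into a $0$; there are exactly $u+1$ such $w$, each lying in $B_u$. For each such pre-image $w$, the probability of selecting it is $\binom{m}{u}^{-1}$, and then $w$ has $m-u$ zeros, exactly one of which (the coordinate where $\widetilde w$ and $w$ differ) produces $\widetilde w$ when flipped; that coordinate is chosen with probability $1/(m-u)$. Hence
\begin{equation*}
P(\widetilde W=\widetilde w)=(u+1)\cdot\frac{1}{\binom{m}{u}}\cdot\frac{1}{m-u}=\frac{u+1}{(m-u)\binom{m}{u}}=\frac{1}{\binom{m}{u+1}},
\end{equation*}
using the identity $(u+1)\binom{m}{u+1}=(m-u)\binom{m}{u}$. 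This equals $P_{(u+1)}(\widetilde w)$ for every $\widetilde w\in B_{u+1}$, and both measures are supported on $B_{u+1}$, so $\widetilde W\sim P_{(u+1)}$.

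There is essentially no hard step here; the only thing to be careful about is the bookkeeping that every pre-image of $\widetilde w$ lies in $B_u$ and that the flipped coordinate is uniquely determined by the pair $(w,\widetilde w)$, so there is no double counting. One should also mention the degenerate cases ($p\in\{0,1\}$) are either excluded by the hypothesis that the relevant conditional probabilities are positive or are vacuous, and that $u<m$ guarantees $B_{u+1}\neq\emptyset$ and $m-u\geq 1$ so the construction is well defined. I would close by remarking that Proposition \ref{prop} is exactly the ingredient needed to verify \textbf{A3}: conditionally on $V=v$, the indicator vector $\mathfrak b(Z)$ of $\mathbf D$-occurrences among the $v$ selected triplets is, by the Markov property, i.i.d. Bernoulli($q$) of length $v$, and $\mathcal R$ acts on it precisely by the flip-a-zero operation, so $Z\sim P_{(u,v)}$ implies $\mathcal R(Z)\sim P_{(u+1,v)}$.
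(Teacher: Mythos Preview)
Your proof is correct and follows essentially the same approach as the paper's own proof: first identify $P_{(u)}$ as the uniform law on the set of binary words with exactly $u$ ones, then compute $P(\widetilde W=\widetilde w)$ by summing over the $u+1$ pre-images and using the identity $(u+1)\binom{m}{u+1}=(m-u)\binom{m}{u}$. Your additional remarks on degenerate cases and the link to \textbf{A3} are extra commentary beyond what the paper includes in this proof, but they are accurate.
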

\begin{proof}
For any $u=0,\,\cdots,m$, let $\mathcal{A}(u)\subseteq\{0,1\}^{m}$
consist of all binary sequences containing exactly $u$ ones. For any
$z\in\mathcal{A}(u)$,
\begin{align*}
{P}\left(\left.Z=z\,\right|U=u\right)=\frac{{P}\left(Z=z,\,U=u\right)}{{P}\left(U=u\right)}=
\frac{{P}\left(Z=z\right)}{{P}\left(U=u\right)}=\frac{p^{u}(1-p)^{m-u}}{\displaystyle{\binom{m}{u}}p^{u}(1-p)^{m-u}}=
\binom{m}{u}^{-1}.
\end{align*}
In other words, ${P}_{(u)}$ is the uniform distribution on
$\mathcal{A}(u)$. Now for any $u=0,\cdots,m-1$, let $W$ be any
random vector such that $W\sim {P}_{(u)}$, then
$\widetilde{W}$ is supported on $\mathcal{A}(u+1)$. For any
$z\in\mathcal{A}(u+1)$, let $0\leq i_{1}<\cdots<i_{u+1}\leq m$ be
the positions of ones in $z$, and let $\hat{z}_{i_{j}}$,
$j=1,\cdots,u+1$, be the sequence in $\mathcal{A}(u)$ obtained from $z$ by replacing
the symbol $1$ at position $i_{j}$ with $0$. We have
\begin{align*} {P}\!\left(\widetilde{W}\!=\!z\right)\!=\!\sum_{j=1}^{u+1}{P}\left(\left.\widetilde{W}\!=\!z\,\right|W\!=\!
\hat{z}_{i_{j}}\right){P}\!\left(W\!=\!\hat{z}_{i_{j}}\right)=(u\!+\!1)\frac{1}{m\!-\!u}\binom{m}{u}^{-1}\!\!=
\binom{m}{u+1}^{-1}\!\!={P}_{(u+1)}(z).
\end{align*}
\end{proof}
Let us consider the  sequence $Z=Z_1,\ldots,Z_n$ and let
$$m:=\lfloor {n\over 3}\rfloor.$$
Recall that
$$V=\sum_{j=1}^m f(\xi_i)=\sum_{i=1}^m\eta_i,$$
where $$\eta_i:=f(\xi_i)=\mathfrak{n}_i(Z)\in \{0,1\}.$$ Since $Z$
is stationary, we have that the sequence
$\eta:=(\eta_1,\ldots,\eta_m)$ is a stationary binary sequence. As
in the proof of Proposition~\ref{prop}, let $\mathcal{ A}(v)$ be the set of binary
sequences of length $m$ having $v$ ones. It is easy to see that
additional conditioning on $U$ will not change the conditional
probability of $\eta$ (given $u\leq v$), because for any vector
$a\in \mathcal{ A}(v)$ we have $\{\eta=a\} \subseteq \{V=v\}$ and
\begin{align*}
P(\eta=a|V=v,U=u)&={P(U=u,V=v,\eta=a)\over
P(U=u,V=v)}={P(U=u|\eta=a)P(\eta=a)\over
P(U=u,V=v)}\\
&={\binom{v}{u}q^u(1-q)^{v-u}P(\eta=a)\over P(U=u,V=v)}.\end{align*} Since
$$P(U=u,V=v)=\sum_{a\in \mathcal{ A}(v)}P(U=u|\eta=a)P(\eta=a)=\binom{v}{u}q^u(1-q)^{v-u}P(V=v),$$
we have
\begin{equation}\label{uv}
P(\eta=a|V=v,U=u)={P(\eta=a,V=v)\over P(V=v)}=P(\eta=a|V=v).\end{equation}
For any $u\leq v\leq m$, let $\mathcal{ B}(u,v)$ be the set of sequences  such
that the value of $\mathfrak{u}$ and $\mathfrak{v}$ are $u$ and $v$ respectively,
that is
$$\mathcal{ B}(u,v)=\{z\in (\A\times \A)^n,\quad \mathfrak{u}(z)=u,\quad
\mathfrak{v}(z)=v\}.$$ Fix $u\leq v\leq m$ and  $Z_{(u,v)}\sim
P_{(u,v)}$ (i.e.~$P(Z_{(u,v)}=z)=P(Z=z|U=u, V=v)$). Let us compute $P_{(u,v)}$. To this aim
define  $B:=(B_1,\ldots,B_V)\equiv \mathfrak{b}(Z)$. Now for any $z\in \mathcal{
B}(u,v)$,
by definition of $P_{(u,v)}$,
since $\{Z=z\} \subseteq \{\eta=\mathfrak{n}(z), B=\mathfrak{b}(z)\} \subseteq \{U=u, V=v\}$, we have
\begin{align*}
P({Z_{(u,v)}}=z)&= P(Z=z|U=u,V=v)=P({Z}=z,\eta = \mathfrak{n}(z),B=\mathfrak{b}(z)| U=u,V=v)\\
&=P({Z}=z|\eta =
\mathfrak{n}(z),B=\mathfrak{b}(z))P(\eta = \mathfrak{n}(z),B=\mathfrak{b}(z)|U=u,V=v).
\end{align*}
Given $\eta$, let  $Z'$ be the random vector obtained by collecting all
random variables from $(Z_1,\ldots,Z_n)$ corresponding to the
triplets where $\eta_i=0$. And, analogously, let $z'$ be the
vector obtained by $z$ by collecting the triplets where $\mathfrak{n}_i(z)=0$. From the
Markov property we have
\begin{align*}
P(Z'=z'|\eta = \mathfrak{n}(z), B=\mathfrak{b}(z))=P(Z'=z'|\eta =
\mathfrak{n}(z)).\end{align*} Let $1\leq i_1<\cdots < i_v\leq m$ be
the indexes of corresponding ones in $\mathfrak{n}(z)$. Then, from
$\mathfrak{b}(z)$ we know for every $j=1,\ldots,v$, whether
$z_{3(i_j-1)+2}$ equals $\mathbf{D}$ or not. But this does not fully
determine the values of $z_{3(i_j-1)+2}$. Hence
\begin{align*}
&P({Z}=z|\eta = \mathfrak{n}(z),B=\mathfrak{b}(z))=\\
&P(Z'=z'|\eta =
\mathfrak{n}(z))\prod_{j=1}^vP(Z_{3(i_j-1)+2}=z_{3(i_j-1)+2} |
Z_{3(i_j-1)+1}=\mathbf{A},Z_{3(i_j-1)+3}=\mathbf{B},B_j=\mathfrak{b}_j(z)).\end{align*}
If, in the product above, $\mathfrak{b}_j(z)=1$, then
$z_{3(i_j-1)+2}=\mathbf{D}$ and $$P(Z_{3(i_j-1)+2}=z_{3(i_j-1)+2} |
Z_{3(i_j-1)+1}=\mathbf{A},Z_{3(i_j-1)+3}=\mathbf{B},B_j=\mathfrak{b}_j(z))=1,$$
otherwise
\begin{align*}
&P(Z_{3(i_j-1)+2}=z_{3(i_j-1)+2} |
Z_{3(i_j-1)+1}=\mathbf{A},Z_{3(i_j-1)+3}=\mathbf{B},B_j=0)=\\
& P(Z_{3(i_j-1)+2}=z_{3(i_j-1)+2} |
Z_{3(i_j-1)+1}=\mathbf{A},Z_{3(i_j-1)+3}=\mathbf{B},Z_{3(i_j-1)+2}\ne
\mathbf{D})=:\rho({3(i_j-1)+2}, z);\end{align*}
note that
\begin{equation}\label{eq:sum1}
 \sum_{F \in \mathbb{A}\times\mathbb{A} \colon F \not = \mathbf{D}} P(Z_{3(i_j-1)+2}= F |
Z_{3(i_j-1)+1}=\mathbf{A},Z_{3(i_j-1)+3}=\mathbf{B},Z_{3(i_j-1)+2}\ne
\mathbf{D})=1.
\end{equation}
Thus
\begin{align*}
\prod_{j=1}^vP(Z_{3(i_j-1)+2} &=z_{3(i_j-1)+2}|
Z_{3(i_j-1)+1}=\mathbf{A},Z_{3(i_j-1)+3}=\mathbf{B},B_j=\mathfrak{b}_j(z))\\
&=\mathop{\prod_{j=1, \ldots,v}}_{\mathfrak{b}_j(z)=0}P(Z_{3(i_j-1)+2}=z_{3(i_j-1)+2} |
Z_{3(i_j-1)+1}=\mathbf{A},Z_{3(i_j-1)+3}=\mathbf{B},Z_{3(i_j-1)+2}\ne \mathbf{D})\\
&=
\mathop{\prod_{j=1, \ldots,v}}_{\mathfrak{b}_j(z)=0}\rho({3(i_j-1)+2},z)=:\rho_z.\end{align*}
From equation~\eqref{uv}, we know
$$P(\eta = \mathfrak{n}(z)|U=u,V=v
)=P(\eta = \mathfrak{n}(z)|V=v
).$$ By the Markov property
$$P(B=\mathfrak{b}(z)|\eta=\mathfrak{n}(z),U=u,V=v)=P(B=\mathfrak{b}(z)|\eta=\mathfrak{n}(z),
U=u)$$ and this probability is equal to  the probability that $v$ i.i.d
Bernoulli random variables take values $\mathfrak{b}(z)$ given their
sum is equal to $u$. This probability is $\binom{v}{u}^{-1}$. Thus,
$$P(B=\mathfrak{b}(z)|\eta=\mathfrak{n}(z),U=u,V=v)=\binom{v}{u}^{-1}.$$
Therefore, for any $z\in \mathcal{ B}(u,v)$, we have
\begin{equation}\label{eq:1}
P({Z_{(u,v)}}=z)=P(Z'=z'|\eta =
\mathfrak{n}(z))\rho_z P(\eta = \mathfrak{n}(z)|V=v)\binom{v}{u}^{-1}.
\end{equation}
We apply now the random transformation and we compute
$P(\mathcal{ R}(Z_{(u,v)})=z)$.
Clearly, given $z \in \mathcal{B}(u+1,v)$,
\[
 P(\mathcal{ R}(Z_{(u,v)})=z)= \sum_{\tilde z \in \mathcal{B}(u,v)} P(\mathcal{ R}(Z_{(u,v)}=z| Z_{(u,v)}=\tilde z) P(Z_{(u,v)}=\tilde z)=(*)
\]
and
\[
 P(\mathcal{ R}(Z_{(u,v)})=z| Z_{(u,v)}=\tilde z)=
 \begin{cases}
0 & \textrm{if }\tilde z \not \in H_\mathcal{ R}(z)\\
1/(v-u)  & \textrm{if }\tilde z \in H_\mathcal{ R}(z)\\
 \end{cases}
\]
where
$$H_\mathcal{ R}(z):=\{\tilde z \colon P(\mathcal{ R}(\tilde z)=z)>0\}=
\mathop{\bigcup_{j=1, \ldots,v}}_{\mathfrak{b}_j(z)=1}
\{\tilde z \colon P(\mathcal{ R}(\tilde z)=z)>0, \tilde z_{3(i_j-1)+2)} \not = \mathbf{D}\}$$
the latter being the union of $u+1$ pairwise disjoint sets. 
Define $\widetilde \eta:=\mathfrak{n}(\mathcal{ R}(Z_{(u,v)}))$ and observe that
if $\tilde z \in H_\mathcal{ R}(z)$ then
$P(\mathcal{ R}(Z_{(u,v)})^\prime=z^\prime|\widetilde \eta =\mathfrak{n}(z) )=
P(Z^\prime=\tilde z^\prime|\eta =\mathfrak{n}(\tilde z) )$ and
$P(\widetilde \eta =\mathfrak{n}(z)|V=v)=P(\eta =\mathfrak{n}(\tilde z)|V=v)$.
Moreover $\sum_{\tilde z \in H_\mathcal{ R}(z)} \rho_{\tilde z}=(u+1)\rho_z$ (decompose the sum using the above partition
of $H_\mathcal{ R}(z)$ into $u+1$ subsets  and use equation~\eqref{eq:sum1}). Thus, by computing
$P(Z_{(u,v)}=\tilde z)$ by means of equation~\eqref{eq:1}, we obtain
\[
\begin{split}
 (*)&= \sum_{\tilde z \in H_\mathcal{ R}(z)} P(Z'=\tilde z'|\eta =
\mathfrak{n}(\tilde z))\rho_{\tilde z}
 {P(\eta =\mathfrak{n}(\tilde z)|V=v)}{\binom{v}{u}^{-1}} \frac{1}{v-u} \\
  &= \sum_{\tilde z \in H_\mathcal{ R}(z)} P(\mathcal{ R}(Z_{(u,v)})^\prime=z^\prime|\widetilde \eta =\mathfrak{n}(z) )\rho_{\tilde z}
  {P(\widetilde \eta =\mathfrak{n}(z)|V=v)}{\binom{v}{u}^{-1}} \frac{1}{v-u}\\
   &= P(\mathcal{ R}(Z_{(u,v)})^\prime=z^\prime|\widetilde \eta =\mathfrak{n}(z) )\rho_z
  {P(\widetilde \eta =\mathfrak{n}(z)|V=v)}{\binom{v}{u+1}^{-1}} \\
  \end{split}
\]
which, according to equation~\eqref{eq:1}, implies that $\mathcal{ R}(Z_{(u,v)}) \sim P_{(u+1,v)}$
and the proof is complete.
\newparagraph{Main result.}
We have defined the random transformation $\mathcal{ R}$, random
variables $U$, $V$ and sets $\U(v)$ and $\V$ such that assumptions
{\bf A3} and {\bf A4} with $\varphi(n)$ hold. Since $\mathcal{ R}$
changes at most two letters at time, by equation~\eqref{eq:MaxScoreChange},
the assumption {\bf A2} holds for $A=2 \Delta$. Thus, recalling that $b_o$ can be chosen
arbitrarily close to 1, from Theorem~\ref{general}
we have the
following result.
\begin{theorem} \label{thm:part}
Let  $\Phi : \mathbb{R}^+ \to \mathbb{R}^+$ be convex
non-decreasing function and let $\mu_n$ be a sequence of reals. If
there exists $\e_o>0$ such that the random transformation $\mathcal{ R}$
satisfies {\bf A1}, then for every $n$ sufficiently large, the following
inequality holds
\begin{equation}\label{thm2}
E\Phi\big(|L(Z)-\mu_n|\big) \geq c_o\Phi\Big({\e_o \sqrt{2\alpha
n}\over 16}\Big),\end{equation} where 
$\alpha={1\over 3} P(Z_1=\mathbf{A},Z_3=\mathbf{B})$ and $0<c_o< b(q)^{-1}\sqrt{2 \alpha}/8$
($b(q)$ defined as in equation~\eqref{eq:bq}).
\end{theorem}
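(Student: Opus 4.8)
The plan is to verify that the objects constructed in this section — the random transformation $\mathcal{R}$, the random variables $U$ and $V$, and the sets $\U(v)$ and $\V$ — satisfy hypotheses \textbf{A2}, \textbf{A3}, \textbf{A4} of Theorem~\ref{general}, and then to invoke that theorem directly. Assumption \textbf{A1} is \emph{not} to be proved; it is taken as a hypothesis of the present statement. So the work is really a bookkeeping exercise: assemble the four ingredients already established above into the form required by Theorem~\ref{general}, and compute the resulting constants.

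The steps, in order, are as follows. First, \textbf{A2}: the transformation $\mathcal{R}$ changes at most two coordinates of $z$ (it rewrites the middle letter of one triplet, which affects at most the pair $(X_i,Y_i)$ at that position, hence at most two of the $2n$ symbols), so by \eqref{eq:MaxScoreChange} we have $L(\mathcal{R}(Z))-L(Z)\geq -2\Delta$, giving \textbf{A2} with $A=2\Delta$. Second, \textbf{A3}: this is exactly the content of the long computation culminating in the identity $\mathcal{R}(Z_{(u,v)})\sim P_{(u+1,v)}$, valid for $v\in\V$ and $u\in\U(v)$, once we note that for such $(u,v)$ we have $u<v$ so the transformation is well-defined (there is at least one zero in $\mathfrak{b}(z)$ to flip). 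Third, \textbf{A4}: by Lemma~\ref{binomlclt} applied with $p=q$ and $\beta=1$, together with the paragraph fixing $n_2$ and $n_3$, we obtain $\varphi_v(n)=\varphi(n)=1/(b\sqrt n)$ and $m_n(v)\geq 2\sqrt v-1\geq b^{-1}\sqrt{2\alpha}\,\varphi(n)^{-1}$ for all $v\in\V$ and $n\geq n_3$, so \textbf{A4} holds with $c=b^{-1}\sqrt{2\alpha}$, independent of $v$; and $\varphi(n)\to 0$. Fourth, $P(V\in\V)\geq b_o$ for $n$ large: this is the Hoeffding-for-Markov-chains argument carried out above, and crucially $b_o$ may be taken arbitrarily close to $1$.

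With all four assumptions verified, Theorem~\ref{general} yields, for every sufficiently large $n$ and every $c_o\in(0,b_o c/8)$,
\[
E\Phi\big(|L(Z)-\mu_n|\big)\;\geq\; c_o\,\Phi\Big(\frac{\e_o c}{16\varphi(n)}\Big)
= c_o\,\Phi\Big(\frac{\e_o\sqrt{2\alpha n}}{16}\Big),
\]
using $c/\varphi(n)=b^{-1}\sqrt{2\alpha}\cdot b\sqrt n=\sqrt{2\alpha n}$. Since $c=\sqrt{2\alpha}/b$ with $b$ any constant exceeding $b(q)$, letting $b\downarrow b(q)$ and $b_o\uparrow 1$ (and absorbing the $\sqrt{\Delta_n}$ correction, which tends to $0$) shows the admissible range of $c_o$ is any value in $(0,\,b(q)^{-1}\sqrt{2\alpha}/8)$, which is the claimed bound \eqref{thm2}.

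I do not expect a genuine obstacle here, since every component has already been established in the preceding text; the only care needed is in tracking how the "arbitrarily close to $1$" choice of $b_o$ and the "arbitrarily close to $b(q)$" choice of $b$ combine with the constraint $c_o<b_o c/8$ from Theorem~\ref{general}, so that the final statement records the supremal open interval $(0,b(q)^{-1}\sqrt{2\alpha}/8)$ rather than a single value of $c_o$. This is precisely the remark made just after the \textbf{A4} verification ("we can take any $c_o<c/8$"), and it should simply be cited there.
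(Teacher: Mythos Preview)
Your proposal is correct and follows essentially the same approach as the paper: assemble the already-verified hypotheses \textbf{A2}, \textbf{A3}, \textbf{A4} (with the specific constants $A=2\Delta$, $\varphi(n)=1/(b\sqrt n)$, $c=b^{-1}\sqrt{2\alpha}$) together with the Hoeffding bound $P(V\in\V)\geq b_o$, take \textbf{A1} as hypothesis, and invoke Theorem~\ref{general}. Your tracking of the constants --- in particular the passage from $c_o<b_o c/8$ to the open interval $(0,b(q)^{-1}\sqrt{2\alpha}/8)$ via $b_o\uparrow 1$ and $b\downarrow b(q)$ --- is exactly what the paper does in the paragraph preceding the theorem statement.
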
 In particular, when $\Phi(x)=x^r$, for $r\geq 1$ and
$\mu_n=EL(Z)$, then equation~\eqref{thm2} is
$$E\mid L(Z)-EL(Z)\mid ^r\geq c_o\Big({\e_o \sqrt{2\alpha}
\over 16}\Big)^rn^{r\over 2},$$ where $$c_o<{\sqrt{2 \alpha}\over 8b(q)}.$$
Taking $r=2$, we obtain the lower bound for variance
$$\v(L(Z))\geq a_o n,\quad a_o:={2c_o\alpha \over 16^2}\e_o^2.$$
\subsection{Combining random transformations}\label{subsec:combined} Suppose ${\bf
A}_i,{\bf B}_i,{\bf D}_i$, $i=1,2$ are pairs of letters and let us
briefly consider a  random transformation $\mathcal{ R}$ that  picks
either a random $(\mathbf{A}_1\cdot \mathbf{B}_1)$-triplet which
does not have a letter $\mathbf{D}_1$ in-between    or  a random
$(\mathbf{A}_2\cdot \mathbf{B}_2)$-triplet which does not have a
letter $\mathbf{D}_2$ in-between    (with uniform distribution over
both kind of triplets) and changes the letter in the middle of the
triplet either into $\mathbf{D}_1$-letter (if the chosen triplet was
$(\mathbf{A}_1\cdot \mathbf{B}_1)$) or  into $\mathbf{D}_2$-letter
(if the chosen triplet was $(\mathbf{A}_2\cdot \mathbf{B}_2)$). Such
a transformation $\mathcal{R}$ can be considered as a combination of
two random transformations: $\mathcal{R}_1$ that acts on
$(\mathbf{A}_1\cdot \mathbf{B}_1)$-triplets and $\mathcal{R}_2$ that
acts on $(\mathbf{A}_2\cdot \mathbf{B}_2)$.
We suppose that $(\mathbf{A}_1,\mathbf{B}_1) \not = (\mathbf{A}_2, \mathbf{B}_2)$.
Thus, for $i=1,2$, we
now have the random variables $V_i$ that count $(\mathbf{A}_i\cdot
\mathbf{B}_i)$-triplets (and are dependent on each other) and
random variables $U_i$ that counts number of states ${\bf D}_i$
in-between the triplets.  Let $q_i$ be the probability of finding
a $\mathbf{D}_i$-letter inside
$(\mathbf{A}_i\cdot \mathbf{B}_i)$-triplet.
Thus given $V_i=v_i$,
$U_i\sim B(v_i,q_i)$, $i=1,2$. Given $V_1$ and $V_2$, the random
variables $U_1$ and $U_2$ are independent. \\
We are now going to  define the  combined random transformation
$\mathcal{R}$. In what follows, let $V=(V_1,V_2)$ and $U=U_1+U_2$.
Given $V=v:=(v_1,v_2)$, the random variable $U$ takes values from
$0,1,\ldots,v_1+v_2$. Now define  the probabilities
$$p(l|u,v):=P(U_1=l|U=u,V=v),\quad l=l_1,l_1+1,\ldots,l_2,$$
where  $l_1=l_1(u,v_2):=(u-v_2)\vee 0 $ and $l_2=l_2(u,v_1):=u\wedge v_1$. Thus $p(l|u,v)$
is the probability that there are $l$ $\mathbf{D}_1$-letters (inside
the triplets) given the sum of $\mathbf{D}_1$ and $\mathbf{D}_2$
letters (inside the corresponding triplets)  is $u$. The random
transformation $\mathcal{R}$ picks the side $i$ with certain
probability $r_i$ and then applies the transformation
$\mathcal{R}_i$. In order for the composed random transformation
$\mathcal{R}$ to satisfy {\bf A3}, the probabilities $r_i$ should be
chosen carefully. To this aim, given $z$, define $\mathfrak{u}_i(z)$ and
$\mathfrak{v}_i(z)$, $i=1,2$ as usual and let
$\mathfrak{w}(z):=(\mathfrak{u}_1(z),\mathfrak{u}_2(z),\mathfrak{v}_1(z),\mathfrak{v}_2(z))$.
We now define the probabilities
$r_i(z)=r_i(\mathfrak{w}(z))=r_i(u_1,u_2,v)$ such that
$r_1(z)+r_2(z)=1$, $r_1(v_1,u_2,v)=r_2(u_1,v_2,v)=0$ and the
following conditions hold:
\begin{align}\label{kesk}
&r_1(l-1,u-l+1,v)p(l-1|u,v)+r_2(l,u-l,v)p(l|u,v)=p(l|u+1,v),\quad
l_2\geq l>l_1
\\\label{aar2}
&r_2(0,u,v)p(0|u,v)=p(0|u+1,v),\text{when  } u<v_2\\\label{aar1}
&r_1(u,0,v)p(u|u,v)=p(u+1|u+1,v),\text{when  } u<v_1\end{align}
for all $u=0,\ldots, v_1+v_2-1$.
Now
for any $w:=(u_1,v_1,u_2,v_2)$, such that $ v_i\geq u_i\geq 0$
and $v_1+v_2\leq m$, we define a random variable $T_w$ such that
$P(T_w=i)=r_i(w)$, $i=1,2$ and given the random variables
$U_i=u_i,V_i=v_i$, $T_w$ is independent of $Z$. The transformation
$\mathcal{R}$ is now formally defined as follows:
$$\mathcal{R}=\left\{
             \begin{array}{ll}
               \mathcal{R}_1(z), & \hbox{if $T_{\mathfrak{w}(z)}=1$;} \\
               \mathcal{R}_2(z), & \hbox{if $T_{\mathfrak{w}(z)}=2$.}
             \end{array}
           \right.$$
In general, the probabilities $r_i$ depend on the probabilities
$q_i$. When $q_1=q_2$, then 
$$r_i(u_1,u_2,v):={{v}_i-{u}_i\over
({v}_1-{u}_1)+({v}_2-{u}_2)},\quad i=1,2$$
satisfy the requirements.
Thus, in that case
$\mathcal{R}$ just picks one $(\mathbf{A}_i\cdot
\mathbf{B}_i)$-triplet over all such triplets with no
$\mathbf{D}_i$-letter inside with {\it uniform distribution}, whilst
in the case $q_1\ne q_2$, the distribution is not uniform. It is
easy to see  that such $r_i$ satisfy conditions (\ref{kesk})
(\ref{aar1}) and (\ref{aar2}). Indeed, the reader can easily prove the
following statement.
\begin{proposition}
Let $X\sim B(v_1,q)$ and $Y\sim B(v_2,q)$ two independent binomially
distributed random variables. Then for any integers $l$ and $u$ such
that $u<v_1+v_2$ and $u \wedge v_1 \geq l> (u-v_2) \vee 0$ we have
\begin{align*}
{v_1-l+1\over v_1+v_2-u}P(X=l-1|X+Y=u)+{v_2-u+l\over
 v_1+v_2-u}P(X=l|X+Y=u)=P(X=l|X+Y=u+1).
\end{align*}
Moreover, when $u<v_2$, then $${v_2-u\over
v_1+v_2-u}P(X=0|X+Y=u)=P(X=0|X+Y=u+1).$$
\end{proposition}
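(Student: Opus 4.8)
The plan is to reduce the proposition to a standard fact about the hypergeometric distribution, after which it becomes either a one-line binomial-coefficient identity or an immediate consequence of Proposition~\ref{prop}. First I would observe that, since $X$ and $Y$ are independent Bernoulli sums with the \emph{same} success probability $q$, their sum satisfies $X+Y\sim B(v_1+v_2,q)$, and, in the non-degenerate case $q\in(0,1)$, the conditional law of $X$ given $X+Y=u$ is hypergeometric and free of $q$:
\[
P(X=l\mid X+Y=u)=\frac{\binom{v_1}{l}\binom{v_2}{u-l}}{\binom{v_1+v_2}{u}}=:h(l\mid u),\qquad (u-v_2)\vee 0\le l\le u\wedge v_1,
\]
with $h(l\mid u)=0$ otherwise. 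The standing hypotheses $u<v_1+v_2$ and $(u-v_2)\vee 0<l\le u\wedge v_1$ are exactly what is needed so that $v_1+v_2-u>0$, so that $\{X+Y=u\}$ and $\{X+Y=u+1\}$ have positive probability, and so that Proposition~\ref{prop} is applicable with $m=v_1+v_2$; for $q\in\{0,1\}$ the assertion is vacuous since $\{X+Y=u\}$ is then a null event.

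With this reduction, the claimed identity reads
\[
\frac{v_1-l+1}{v_1+v_2-u}\,h(l-1\mid u)+\frac{v_2-u+l}{v_1+v_2-u}\,h(l\mid u)=h(l\mid u+1),
\]
and I would establish it in one of two ways. The computational route multiplies through by $(v_1+v_2-u)\binom{v_1+v_2}{u}$ and uses the absorption identities $(v_1-l+1)\binom{v_1}{l-1}=l\binom{v_1}{l}$, $(v_2-u+l)\binom{v_2}{u-l}=(u-l+1)\binom{v_2}{u+1-l}$ and $(v_1+v_2-u)\binom{v_1+v_2}{u}=(u+1)\binom{v_1+v_2}{u+1}$; after substitution both terms share the factor $\binom{v_1}{l}\binom{v_2}{u+1-l}$, their coefficients sum to $l+(u-l+1)=u+1$, this $u+1$ cancels against $(u+1)\binom{v_1+v_2}{u+1}$, and one is left with $\binom{v_1}{l}\binom{v_2}{u+1-l}\big/\binom{v_1+v_2}{u+1}=h(l\mid u+1)$. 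The conceptual route, which I find cleaner, applies Proposition~\ref{prop} with $m=v_1+v_2$: start from a uniformly random binary word of length $v_1+v_2$ with $u$ ones and split its coordinates into a first block of size $v_1$ and a second block of size $v_2$, so that the number of ones in the first block has law $h(\cdot\mid u)$; flipping a uniformly chosen $0$ into a $1$ yields, by Proposition~\ref{prop}, the uniform word with $u+1$ ones, whose first-block count has law $h(\cdot\mid u+1)$. Conditioning on whether the flipped coordinate lies in the first block — probability $(v_1-l+1)/(v_1+v_2-u)$ when the first block currently holds $l-1$ ones — or in the second block — probability $(v_2-u+l)/(v_1+v_2-u)$ when it holds $l$ ones — produces precisely the displayed recursion.

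The ``moreover'' claim is then the boundary instance $l=0$ of that recursion: there $h(l-1\mid u)=h(-1\mid u)=0$, the surviving coefficient of $h(0\mid u)$ equals $(v_2-u)/(v_1+v_2-u)$, and the hypothesis $u<v_2$ is exactly what prevents $h(0\mid u+1)$ (equivalently $\binom{v_2}{u+1}$) from vanishing trivially. I do not anticipate a genuine obstacle: every step is elementary, and the only thing requiring care is the bookkeeping of the index ranges — keeping the boundary value $l=0$ (the ``moreover'' case) separate from the generic range $l\ge 1$, and verifying that all the coefficients and conditional probabilities above are legitimate throughout that range.
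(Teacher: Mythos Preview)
Your proposal is correct. The paper itself does not supply a proof of this proposition --- it is introduced with ``the reader can easily prove the following statement'' and left at that --- so there is nothing to compare against beyond noting that your argument fills the gap cleanly. Both of your routes are valid: the direct binomial-coefficient computation via the absorption identities goes through exactly as you describe, and the conceptual route via Proposition~\ref{prop} (uniform binary word, flip a random zero, track the first-block count) is precisely the kind of argument the surrounding text invites, since the paper has just set up Proposition~\ref{prop} for exactly this purpose. Your handling of the boundary case $l=0$ and of the degeneracy $q\in\{0,1\}$ is also correct.
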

Clearly $\mathcal{R}$ satisfies {\bf A2}. We now show that it also
satisfies {\bf A3}. Fix $v=(v_1,v_2)$ such that $v_1+v_2\leq m$.
Now, we can decompose the measure $P_{(u,v)}$ as follows
\begin{equation}\label{decomp}
P_{(u,v)}=\sum_{l=l_1}^{l_2} P_{(l,u-l,v)}p(l|u,v),
\end{equation}
where $P_{(l,u-l,v)}$ is the distribution of $Z$ given
$U_1=l,U=u,V=v$. We know that  $\mathcal{R}_i$ satisfies {\bf A3}
for any $u=\{0,1,\ldots,v_i-1\}$, thus the following holds: when
$Z\sim P_{(l,u-l,v)}$ and $l<v_1$, $u-l<v_2$, then
$\mathcal{R}_1(Z)\sim P_{(l+1,u-l,v)}$ and $\mathcal{R}_2(Z)\sim
P_{(l,u-l+1,v)}$. Therefore, if $Z\sim P_{(u,v)}$, then
$$\mathcal{R}(Z)\sim
\sum_{l=l_1}^{l_2}\big( P_{(l+1,u-l,v)}r_1(l,u-l,v)+
P_{(l,u-l+1,v)}r_2(l,u-l,v) \Big) p(l|u,v).$$
Thus, by
equation \eqref{kesk}
\begin{align*}
\mathcal{R}(Z)&\sim
P_{(l_1,u-l_1+1,v)}r_2(l_1,u-l_1,v)p(l_1|u,v) \\
&+\sum_{l=l_1+1}^{l_2}
P_{(l,u-l+1,v)}\Big(r_1(l-1,u-l+1,v)p(l-1|u,v)+r_2(l,u-l,v)p(l|u,v)\Big)\\
&+P_{(l_2+1,u-l_2,v)}r_1(l_2,u-l_2,v)p(l_2|u,v)
=P_{(l_1,u-l_1+1,v)}r_2(l_1,u-l_1,v)p(l_1|u,v)\\
&+\sum_{l=l_1+1}^{l_2}P_{(l,u+1-l,v)}p(l|u+1,v)+P_{(l_2+1,u-l_2,v)}r_1(l_2,u-l_2,v)p(l_2|u,v)=(*).
\end{align*} If $u<v_1$ and $u<v_2$, then  $l_1(u,v_2)=l_1(u+1,v_2)=0$ and $l_2(u+1,v_1)=u+1=l_2(u,v_1)+1$,
thus by equations \eqref{aar2} and \eqref{aar1} we
obtain that $(*)$ equals
\[
\begin{split}
P_{(0,u+1,v)}p(0|u+1,v)&+\sum_{l=1}^{u}P_{(l,u+1-l,v)}p(l|u+1,v)+P_{(u+1,0,v)}p(u+1|u+1,v)=\\
& \sum_{l=l_1(u+1,v_2)}^{l_2(u+1,v_1)} P_{(l,u+1-l,v)}p(l|u+1,v)
=P_{(u+1,v)}.\\
\end{split}
\]
 If $u\geq v_1$ and $u<v_2$, then $l_1(u,v_2)=l_1(u+1,v_2)=0$, $l_2(u,v_1)=l_2(u+1,v_1)=v_1$
and then by equation~\eqref{aar2} and since  $r_1(v_1,u-v_1,v)=0$, we
 have that
$(*)$ equals
$$P_{(0,u+1,v)}p(0|u+1,v)+\sum_{l=1}^{v_1}P_{(l,u+1-l,v)}p(l|u+1,v)=\sum_{l=l_1(u+1,v_2)}^{l_2(u+1,v_1)} P_{(l,u+1-l,v)}p(l|u+1,v)
=P_{(u+1,v)}.$$
If $u<v_1$ and $u\geq v_2$, then $l_1(u+1,v_2)=u+1-v_2=l_1(u,v_2)+1$ and $l_2(u+1,v_1)=u+1=l_2(u,v_1)+1$,
thus by equation~\eqref{aar1}
and since $r_2(u-v_2,v_2,v)=0$ we obtain that $(*)$ equals
$$\sum_{l=u-v_2+1}^{u}P_{(l,u+1-l,v)}p(l|u+1,v)+P_{(u+1,0,v)}p(u+1|u+1,v)=\sum_{l=l_1(u+1,v_2)}^{l_2(u+1,v_1)} P_{(l,u+1-l,v)}p(l|u+1,v)=P_{(u+1,v)}.$$
Finally, if  $u\geq v_1$ and $u\geq v_2$, then $l_1(u+1,v_2)=u+1-v_2=l_1(u,v_2)+1$, $l_2(u,v_1)=l_2(u+1,v_1)=v_1$.
Since $r_2(u-v_2,v_2,v)=r_1(v_1,u-v_1,v)=0$ and $(*)$ equals
$$\sum_{l=u-v_2+1}^{v_1}P_{(l,u+1-l,v)}p(l|u+1,v)=\sum_{l=l_1(u+1,v_2)}^{l_2(u+1,v_1)} P_{(l,u+1-l,v)}p(l|u+1,v)=P_{(u+1,v)}.$$
Thus, we have shown that $\mathcal{R}(Z)\sim P_{(u+1,v)}$ and {\bf
A3} is fulfilled for any $u\in \{0,1,\ldots, v_1+v_2-1\}$.\\
{To the end of the paragraph, let us skip $n$ from the notation and
let $\mathcal{V}:=\mathcal{V}_1\times \mathcal{V}_2$. For
$(v_1,v_2)\in \mathcal{V}$, let
$\mathcal{U}_i(v_i):=[v_iq_i-\sqrt{v_i},v_iq_i+\sqrt{v_i}]\cap
\mathbb{Z}$ and
$$\mathcal{U}(v):=[(v_1q_1+v_2q_2)-\sqrt{v_1}\wedge\sqrt{v_2}),(v_1q_1+v_2q_2)+\sqrt{v_1}\wedge\sqrt{v_2}]\cap
\mathbb{Z}.$$}
It is not difficult to show that for every $u \in \mathcal{U}(v)$ the cardinality of
$\{(u_1,u_2) \in \mathcal{U}_1(v_1) \times \mathcal{U}_2(v_2) \colon u_1+u_2=u\}$ is at least $\lfloor \sqrt{v_1}\wedge\sqrt{v_2} \rfloor$.
In order to show that $\mathcal{R}$ satisfies {\bf A4}, we assume
without loss of generality that $v_1 \le v_2$, whence
$\sqrt{v_1}\wedge\sqrt{v_2}=\sqrt{v_1}$. We know that
$\mathcal{R}_i$ satisfies {\bf A4}, so for $i=1,2$ there exists a
constant $b_i$ such that for every $u_i\in \mathcal{U}_i(v_i)$ and
$n$ big enough, we have
$$P(U_i=u_i|V_i=v_i)\geq {1\over b_i\sqrt{v_i}}$$
where $b_i$ depends only on $q_i$ (see Lemma~\ref{binomlclt}).
Now observe that $\lfloor x \rfloor/x \ge 1/2$ for all $x \ge 1$; thus, for every $u\in \mathcal{U}(v)$ and every sufficiently large $n$,
\begin{align*}
P(U=u|V=v)&\geq \sum_{u_i\in
\mathcal{U}_i(v_i):u_1+u_2=u}P(U_1=u_1|V_1=v_1)P(U_2=u_2|V_2=v_2)\\
&\geq
\sum_{u_i\in \mathcal{U}_i(v_i):u_1+u_2=u}{1\over
b_1b_2\sqrt{v_1v_2}}
\geq {1\over 2 b_1b_2\sqrt{v_2}}
\geq {1\over
 2b_1b_2\sqrt{n}}.
\end{align*}
The number of elements in  $\mathcal{U}(v)$ is bigger than
$2\sqrt{v_1}-1$ 
and since there exists a constant $c>0$ such that $\sqrt{v_1}\geq
c\sqrt{n}$, we see that {\bf A4} holds with
with $\varphi_v(n)$
independent of $v$.  \\
Finally, we note that from $P(V_i\in \mathcal{V}_i)\geq b_o$ (where
$b_o$ is close to 1), it follows that  $P(V\in \mathcal{V})\geq
1-2(1-b_o)$.
\subsection{About assumption {\bf A1} for the longest common subsequence}
The assumption {\bf A1} depends very much on concrete model and the
scoring function $S$. Even when {\bf A1} it is intuitively
understandable, it is, in general, very difficult to prove. Let us
briefly explain the intuition behind {\bf A1} in the case of the
longest common subsequence. Thus $L(Z)=\ell(Z)$ is the length of the
longest common subsequence.\\
Suppose that there is a letter in $\A$, say $a$ so that the pair
$\mathbf{A}^*:=(a,a)$ has high probability. Such a situation might occur in
many cases in practice, for  example when $X$ and $Y$ are
independent stationary Markov chains having the same distribution
and the probability $P(X_1=a)$ is very high. Since the pair $\mathbf{A}^*$
has high probability, typically the sequence $Z_1,Z_2,\ldots, Z_n$
has many $\mathbf{A}^*$s. Then, in the construction of $V$ and $U$, take
$\mathbf{A}=\mathbf{B}=\mathbf{D}=\mathbf{A}^*$. In this case the random variable $V$ counts the number
of $(\mathbf{A}^*\cdot \mathbf{A}^*)$ triplets in certain positions (i.e first three
letters, then letters $4,5,6$ etc) and $U$ counts the number of
$\mathbf{A}^*$s between these triplets. The random variable $\mathcal{ R}$ now
picks any non-$\mathbf{A}^*$  in-between the triplet (with uniform
distribution) and changes it into $\mathbf{A}^*$. Clearly $\mathcal{ R}$ then
changes at least one non-$a$-letter into $a$-letter. As a result,
the number of $\mathbf{A}^*$s increases and the number of $a$s in $X$ and
$Y$ increases as well. Since there are many $\mathbf{A}^*$s in $Z$ and,
therefore, many $a$s in $X$ and $Y$, any longest common
subsequence has to connect many $a$s on the $X$-side with $a$s on the
$Y$-side. If the probability of $a$ in $X$ is very high, then any
longest common subsequence consists of mostly $a$-pairs. It does
necessarily mean that two $a$s in the same position (thus a
$\mathbf{A}^*$-pair) would be necessarily connected by LCS, but it is very
likely that both $a$s in a $\mathbf{A}^*$ are connected. In fact, as the
simulations in \cite{barder} showed, with highly asymmetrical
distribution of $X_i$ (i.e. having a letter $a$ with high
probability) the subsequence that aligns as many $a$s as possible
is very close to being the longest. \\
Hence, if $X$ and $Y$ sequences both have many $a$-letters then any
LCS connects mostly $a$-letters. That implies that non-$a$-letters
have bigger likelihood to remain unconnected, because connecting a
pair of non $a$-letters will typically destroy many connected
$a$-letter pairs. Thus changing at least one non-$a$-letter  into an
$a$-letter, has tendency to increase LCS.\\\\
The above-described approach has been formalized  in
\cite{HoudreMa:2014, LemberMatzinger:2009}. In those
articles, $X$ and $Y$ are considered independent i.i.d. sequences,
where $X_1$ and $Y_1$ have the common asymmetric distribution over
$\A$ (in \cite{LemberMatzinger:2009} a two letter alphabet is
considered; in \cite{HoudreMa:2014} the result is generalized for
many letter alphabet). The asymmetry means that one letter, say $a$,
has the probability close to one. Thus both sequences consists
mostly of $a$s. In these papers, the random transformation picks
any non-$a$ letter from these letters in $X$ and $Y$ letters and
then changes it into $a$. In this case, the random variable $U$
counts $a$s in $X$ and $Y$ sequence, and there
is no need for $V$-variable, formally take $V\equiv 2n$.\\
Formally the described random transformation used in these two
papers differs from the one in the present article by several
aspects:
\begin{enumerate}
  \item The sequences $X$ and $Y$ are considered separately, not
  pairwise. This is due to the independence of $X$ and $Y$. If
  $X$ and $Y$ are independent Markov chains, then we could
  define $\mathcal{ R}$ also as follows: consider all
  (non-overlapping) triplets in $X$ and $Y$ sequences separately
  and let $V$ count the $a\cdot a$-ones. The maximal number of
  such triplets would be $2\lfloor{n\over 3} \rfloor$, not
  $\lfloor{n\over 3} \rfloor$ as in our case. Then pick any
  triplet with non-$a$-letter in between (either in $X$ or $Y$
  sequence) and change the middle letter into $a$. The random
  variable $U$ counts the $a$s in the middle of the triplets.
  Surely, due to the independence of $X$ and $Y$, the
  conditional distribution of $U$ given $V=v$ is still Binomial
  and it is straightforward to verify that everything else holds
  as well. When  $X$ and $Y$ are dependent, one need them to
  consider them pairwise in order to obtain the conditional
  independence of $B_1,\ldots,B_v$ given $V=v$.
  \item There are no fixed  $a\cdot a$-neighborhoods and hence also no $V$-variable. The fixed
  neighborhood is not needed, because $X$ and $Y$ already
  consists of independent random variables. And the number of
  $a$s is Binomially distributed. In the case on Markov chains,
  the  fixed neighborhoods are needed, again, to obtain the
  conditional independence of $B_1,\ldots,B_v$ given $V=v$.
  Without neighborhoods, there is obviously no need for
  prescribed triplet-locations.
\end{enumerate}
Thus, although formally different, the random transformation in the
present article is of the same nature as the ones used in
\cite{LemberMatzinger:2009,HoudreMa:2014}, where it is proven that
when the probability of $a$ is close to zero then assumption {\bf
A1} holds (see \cite[Theorem 2.1]{HoudreMa:2014},
\cite[Theorem 2.2]{LemberMatzinger:2009}). Therefore, it is reasonable to
believe, that in the case where an $\mathbf{A}^*=(a,a)$ pair has high enough
probability, then  $\mathcal{ R}$ that replaces a random non-$\mathbf{A}^*$ pair
by $\mathbf{A}^*$ satisfies {\bf A1}. To prove that, however, is beyond
the scope of the current paper and needs a separate article.\\\\
Suppose now that there is a pair of different letters $(a,b)$ such
that $P(Z_1=(a,b))$ is close to one. Then take  $\mathbf{A}=\mathbf{B}=\mathbf{D}=(a,b)$ and let
the random transformation to change a non $(a,b)$-pair into
$(a,b)$-pair. Clearly such a random transformation tends to decrease
the length of LCS. But when such a transformation decreases the
length of LCS by a fixed $\e_o$, then defining $L(Z)=n-\ell(Z)$, we
see that {\bf A1} still holds. In other words, it is not important
whether $\mathcal{ R}$ actually increases or decreases the score,
important is that in influences it. Hence, if there is a pair
in $\A\times \A$ occurring with sufficiently large probability, then
the approach in \cite{HoudreMa:2014,LemberMatzinger:2009} applies.
\subsection{Simulations}
The goal of the present subsection is to  check the assumption {\bf
A1} by simulations. Given random transformation $\mathcal{ R}$ and a
sequence $Z=Z_1,\ldots,Z_n$, let us denote
$$E_n:=E[L(\mathcal{ R}(Z))|Z]-L(Z),$$
where the expectation is taken over the random transformation. Under
${\bf A1}$, there exists $\epsilon_o>0$ such that
$$P(E_n\geq \epsilon_o)\to 1.$$
If the convergence above is fast enough, then $P(E_n\geq
\epsilon_o,\,\,\rm{ev})=1$ implying that $\lim\inf_n E_n\geq
\epsilon_o$, a.s.. Our objective now is to study the asymptotic
behavior of $E_n$ for several PMC-models. Throughout the subsection
the score is the length of LCS, i.e. $L(Z)=\ell(Z)$. Let us start
with the model.
\newparagraph{The model.}
Before introducing our specific model we state the following lemma
whose proof is is included for the sake of completeness.
\begin{lemma}\label{lem:partition}
 Let $Z_1,Z_2,\ldots$ be a Markov chain  on $X$ with transition matrix $\mathbb{P}=(p_{xy})_{x,y \in X}$.
  Suppose that $\{A_i\}_{i \in I}$
 is a partition of $X$ and
 define $\pi:X\rightarrow I$ as $\pi(x)=i$ if and only if $x \in A_i$.
 Then the following assertions are equivalent:
 \begin{enumerate}
  \item for every initial distribution of $Z_0$, $\pi(Z_1),\pi(Z_2),\ldots$
  is a Markov chain on $I$ with transition matrix $\mathbb{Q}:=(q_{ij})_{i,j \in I}$;
  \item for all $i,j \in I$, $x \in A_i$
  \begin{equation}\label{eq:constant}
    \sum_{y \in A_j} p_{xy} = q_{ij}.
  \end{equation}
  \end{enumerate}
\end{lemma}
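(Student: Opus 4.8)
The plan is to prove the two implications separately; $(2)\Rightarrow(1)$ carries essentially all the content, while $(1)\Rightarrow(2)$ is immediate.

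For $(2)\Rightarrow(1)$ I would do a direct cylinder computation. Fix an arbitrary initial law for the chain and write $W_n:=\pi(Z_n)$; the goal is to prove by induction on $n$ that
\[
P(W_1=i_1,\dots,W_n=i_n)=P(W_1=i_1)\prod_{k=1}^{n-1}q_{i_ki_{k+1}}\qquad\text{for all }i_1,\dots,i_n\in I,
\]
which is exactly the assertion that $(W_n)_{n\ge1}$ is a Markov chain with transition matrix $\mathbb{Q}$ (for an arbitrary initial law). The inductive step reduces to the identity $P(W_1=i_1,\dots,W_{n+1}=i_{n+1})=q_{i_ni_{n+1}}P(W_1=i_1,\dots,W_n=i_n)$. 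To obtain it, expand the left-hand side over the underlying states using the Markov property of $Z$, i.e.\ write it as $\sum_{x_1\in A_{i_1}}\cdots\sum_{x_{n+1}\in A_{i_{n+1}}}P(Z_1=x_1)\,p_{x_1x_2}\cdots p_{x_nx_{n+1}}$, perform the innermost summation over $x_{n+1}\in A_{i_{n+1}}$, and apply \eqref{eq:constant}: the quantity $\sum_{x_{n+1}\in A_{i_{n+1}}}p_{x_nx_{n+1}}$ equals $q_{i_ni_{n+1}}$ for \emph{every} $x_n\in A_{i_n}$, hence it is a constant which factors out of the remaining multiple sum, and what is left is precisely $P(W_1=i_1,\dots,W_n=i_n)$.

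For $(1)\Rightarrow(2)$, fix $i,j\in I$ and $x\in A_i$ and apply the hypothesis to the chain started deterministically at $x$, so that $\pi(Z_1)=i$ almost surely. By (1), $(\pi(Z_n))_{n\ge1}$ is then a Markov chain with transition matrix $\mathbb{Q}$, so the probability that its second value equals $j$ is $q_{ij}$; on the other hand, that probability equals $P(Z_2\in A_j\mid Z_1=x)=\sum_{y\in A_j}p_{xy}$. Equating the two yields \eqref{eq:constant}.

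I do not expect a genuine obstacle here, as this is a version of the classical strong lumpability criterion; the only point that needs attention is the bookkeeping in the forward direction. One must check that the factor $q_{i_ni_{n+1}}$ extracted from the multiple sum really does not depend on the summation indices $x_1,\dots,x_n$ (it depends on $x_n$ only through its block $A_{i_n}$, which is held fixed), and it is cleanest to run the induction through the joint cylinder probabilities so that no positivity of $P(W_1=i_1,\dots,W_n=i_n)$ has to be assumed. No irreducibility, aperiodicity, or stationarity of $Z$ is needed.
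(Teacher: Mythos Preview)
Your proposal is correct and follows essentially the same approach as the paper: both directions are handled by the same ideas (decompose the lumped events over the underlying states for $(2)\Rightarrow(1)$, and plug in a well-chosen initial law---a Dirac mass at $x$---for $(1)\Rightarrow(2)$). The only cosmetic difference is that the paper phrases $(2)\Rightarrow(1)$ via the conditional probability $P(\pi(Z_n)=i_n\mid \pi(Z_{n-1})=i_{n-1},\dots)$, whereas you compute joint cylinder probabilities by induction; your formulation is slightly cleaner in that it sidesteps any positivity assumption on the conditioning event.
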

\begin{proof}
 Let us denote by $\mu$ the initial distribution of $Z_0$; hence,
 $P(\pi(Z_0)=i)=\mu(A_i)$.

\noindent $(1) \Longrightarrow (2)$. From the hypotheses
 \[
  q_{ij}=P(\pi(Z_1)=j|\pi(Z_0)=i)=\frac{\sum_{x \in A_i}P(\pi(Z_1)=j|Z_0=x) \mu(x)}{P(\pi(Z_0)=i)}
 \]
and this holds for every distribution $\mu$ (s.t.~$\mu(A_i)>0$) if and only if $q_{ij}=P(\pi(Z_1)=j|Z_0=x)$ for every $x \in A_i$,
that is, $\sum_{y \in A_j} p_{xy} = q_{ij}$ for all $x \in A_i$.

\noindent $(2) \Longrightarrow (1)$. If we compute $P(\pi(Z_n)=i_n| \pi(Z_{n-1})=i_{n-1}, \ldots, \pi(Z_0)=i_0)$ by means of
the decomposition
\[
 \{\pi(Z_n)=i_n, \pi(Z_{n-1})=i_{n-1}, \ldots, \pi(Z_0)=i_0\}=\bigcup_{z \in A_{i_0} \times A_{i_1} \times \cdots A_{i_n}}
 \{{Z_n =z_{n+1}, Z_{n-1} =z_{n} \cdots, Z_0 =z_1}\}
\]
and by using the Markov property of $Z_1,Z_2,\ldots$ and
equation~\eqref{eq:constant}
\[
 P \big (\pi(Z_n)=i_n| \pi(Z_{n-1})=i_{n-1}, \ldots, \pi(Z_0)=i_0 \big )=
 P \big (\pi(Z_n)=i_n| \pi(Z_{n-1})=i_{n-1} \big )=q_{i_{n-1} i_n}
\]
follows easily.
\end{proof}
From this result we can easily derive the most general transition
matrix of a 2-dimensional random walk $Z_n=(X_n,Y_n)$  with  state
space $\{(1,1),(1,0),(0,1),(0,0)\}$ whose marginals are Markov
chains. More precisely, given the marginals of $X$ and $Y$ with
state space $\A=\{0,1\}$
\[
\begin{pmatrix}
  p & 1-p \\
      q & 1-q \\
\end{pmatrix}
\quad
\begin{pmatrix}
 p^\prime & 1-p^\prime \\
 q^\prime & 1-q^\prime \\
 \end{pmatrix}
\]
the most general joint transition matrix can be easily obtained by applying Lemma~\ref{lem:partition} twice:
first with $A_1:=\{(1,1),(1,0)\}$, $A_2:=\{(0,1),(0,0)\}$
(to ensure that $X_n$ is a Markov chain) and then with $A_1:=\{(1,1),(0,1)\}$, $A_2:=\{(1,0),(0,0)\}$
(to ensure that $Y_n$ is a Markov chain).
The final result is
\begin{equation*}
\begin{pmatrix}
      p\lambda_1 & p(1-\lambda_1) & p^\prime-p\lambda_1 & 1+p\lambda_1-p^\prime-p \\
      p\lambda_2 & p(1-\lambda_2) & q^\prime-p\lambda_2 & 1+p\lambda_2-q^\prime-p \\
      q\mu_1 & q(1-\mu_1) & p^\prime-q\mu_1 & 1+q\mu_1-p^\prime-q \\
      q\mu_2 & q(1-\mu_2) & q^\prime-q\mu_2 & 1+q\mu_2-q^\prime-q \\
    \end{pmatrix}
\end{equation*}
with the constraints
\begin{align*}
&\lambda_1\in \Big[{p^\prime+p-1\over p}\vee 0,{p^\prime\over p}\wedge 1\Big],\quad
\lambda_2 \in \Big[{q^\prime+p-1\over p}\vee 0,{q^\prime\over p}\wedge 1\Big], \\
&\mu_1\in \Big[{p^\prime+q-1\over q}\vee 0,{p^\prime\over q}\wedge 1\Big],    \quad \mu_2
\in \Big[{q^\prime+q-1\over q}\vee 0,{q^\prime\over q}\wedge 1\Big].\end{align*}
This $4$-parameter model is sufficiently flexible and general to
cover a large variety of cases.  When $p=p^\prime$ and $q=q^\prime$,
i.e. $X$ and $Y$ have the same distribution, then the transition
matrix is simply
\begin{equation*}
\begin{pmatrix}
      p\lambda_1 & p(1-\lambda_1) & p(1-\lambda_1) & 1+p(\lambda_1-2) \\
      p\lambda_2 & p(1-\lambda_2) & q-p\lambda_2 & 1+p\lambda_2-q-p \\
      q\mu_1 & q(1-\mu_1) & p-q\mu_1 & 1+q\mu_1-p-q \\
      q\mu_2 & q(1-\mu_2) & q(1-\mu_2) & 1+q(\mu_2-2)\\
    \end{pmatrix}.
\end{equation*}
This is the case we are considering in the present subsection. In
what follows, without loss of generality, we shall assume that
$p\geq q$. The parameters $\lambda_i$ and $\mu_i$ regulate the
dependence between marginal sequences $X$ and $Y$. Clearly $X$ and
$Y$ are independent if and only if $\lambda_1=\mu_1=p$ and
$\lambda_2=\mu_2=q$. The transition matrix corresponding to that
particular choice of parameters will be denoted by $\mathbb{P}_{\rm
ind}$. If $\lambda_i$ and $\mu_i$ are maximal i.e.
$$ \lambda_1=1,\quad \lambda_2=q/p,\quad \mu_1=\mu_2=1,$$
then $X$ and $Y$ are (in a sense) maximally positive-dependent and
the corresponding transition matrix is
\begin{equation*}
\begin{pmatrix}      p & 0 & 0 & 1-p \\
      q & p-q & 0 & 1-p \\
      q & 0 & p-q & 1-p \\
      q & 0 & 0 & 1-q\\
    \end{pmatrix}.
\end{equation*}
We shall call this case {\it maximal dependence}, and the "maximal"
here means the maximal number of similar pairs $(1,1)$ or $(0,0)$.
The matrix above is not irreducible and therefore in the simulations
below, we shall use the following "nearly" maximal dependence matrix
\begin{equation}\label{max}
\mathbb{P}_{\rm max}(p,q)=\begin{pmatrix}
p-\epsilon & \epsilon & \epsilon & 1-p-\epsilon \\
      q & p-q & 0 & 1-p \\
      q & 0 & p-q & 1-p \\
      q-\epsilon & \epsilon & \epsilon & 1-q-\epsilon\\
    \end{pmatrix},\end{equation}
where to the end of the subsection $\epsilon=0.05$. Clearly the
distribution of $X$ and $Y$ is not affected by adding $\epsilon$.
The (nearly) maximal dependent $Z$ favors  pairs $(0,0)$ and
$(1,1)$. When $p$ and $q$ are relatively high, then typical outcome
of $Z$ will have many pairs $(1,1)$ and then changing a non
(1,1)-pair into a (1,1)-pair  has a tendency to increase the score.
\\
We shall also consider the "minimal dependence"   matrix that
corresponds to the small $\lambda_i$ and $\mu_i$. Such model favors
dissimilar pairs $(0,1)$ and $(1,0)$. Due to the fact that $X$ and
$Y$ sequences have the same transition matrix, unlike in the case of
maximal dependence, the minimal dependence (corresponding to the
correlation -1) is not always totally achieved and the structure of
minimal dependence matrix depends more on $p$ and $q$. In the
simulations we shall use the following minimal dependence matrices
(again $\epsilon$ is added to have an irreducible chain):
\begin{equation}\label{min}
\mathbb{P}_{\rm min}(p,q)=\begin{cases}
\begin{pmatrix}
2p-1 +\epsilon & 1-p-\epsilon & 1-p- \epsilon & \epsilon \\
      p+q-1 & 1-q & 1-p & 0 \\
      p+q-1 & 1-p & 1-q & 0 \\
      2q-1+\epsilon & 1-q-\epsilon & 1-q-\epsilon & \epsilon\\
    \end{pmatrix}, & \mbox{if $p+q > 1$ and $q \geq \frac{1}{2}$;}\\
\begin{pmatrix}
2p-1 +\epsilon & 1-p-\epsilon & 1-p- \epsilon & \epsilon \\
      p+q-1 & 1-q & 1-p & 0 \\
      p+q-1 & 1-p & 1-q & 0 \\
      \epsilon & q-\epsilon & q-\epsilon & 1-2q+\epsilon\\
    \end{pmatrix}, & \mbox{if $p+q > 1$ and $q < \frac{1}{2}$.}\\
    \end{cases}
\end{equation}
\newparagraph{The simulations.} Let us briefly describe the
simulations for a fixed transition matrix $\mathbb{P}$. First, let
us fix ${\bf A}\in \{0,1\}$. Then we fix a transition matrix
$\mathbb{P}$ and generate a Markov sequence $Z_1,\ldots,Z_{3 \cdot
7500}$ according to the stationary distribution corresponding to
$\mathbb{P}$. Denote
\begin{align*}
J_m:=\{j \colon j\leq m, Z_{3j-2}=Z_{3j}={\bf A}, Z_{3j-1} \neq {\bf A}\}.
\end{align*}
For each $m=100,200,...,7500$ we do the following procedure. If
$J_m= \emptyset$ we don't do anything and just pick the next $m$.
Suppose now that $J_m \neq \emptyset$ (obviously then also
$J_{m+100}\neq \emptyset$). We compute $l(m)$, the length of LCS of
$(Z_1,\ldots,Z_{3m})$. Next, for each $j=1,...,|J_m|$ we do the
following subprocedure. We compute $l(m,j)$, the length of LCS of
the sequence $$(Z_1,...,Z_{3j-2},\mathbf{A},Z_{3j},...,Z_{3m}).$$
Next we compute the difference $$r(m,j):=l(m,j)-l(m).$$ Note that
$r(m,j)\in \{-2,-1,0,1,2\}$. By the end of this subprocedure we have
$|J_m|$ values $r(m,1),\ldots,r(m,|J_m|)$ and we compute
$$E(m):={1\over |J_m|}\sum_{i=1}^{|J_m|}r(m,i).$$
Recall that $E_n=E[L(\mathcal{R}(Z_1,...,Z_n))|Z]-L(Z_1,...,Z_n)$.
Note that $E_{3m} \stackrel{d}{=} E(m)$, where $\mathcal{R}$ is the
random transformation used in the proof of \textbf{A3}, with
$\mathbf{B}=\mathbf{D}=\mathbf{A}$. The final goal is to to see
whether there are indications of the existence of a positive
$\epsilon_o$ such that $|E(m)|\geq \epsilon_o$ eventually.\\
\begin{figure}[!ht]
  \centering
    \includegraphics[width=0.5\textwidth,trim={0 0 1cm 0},clip]{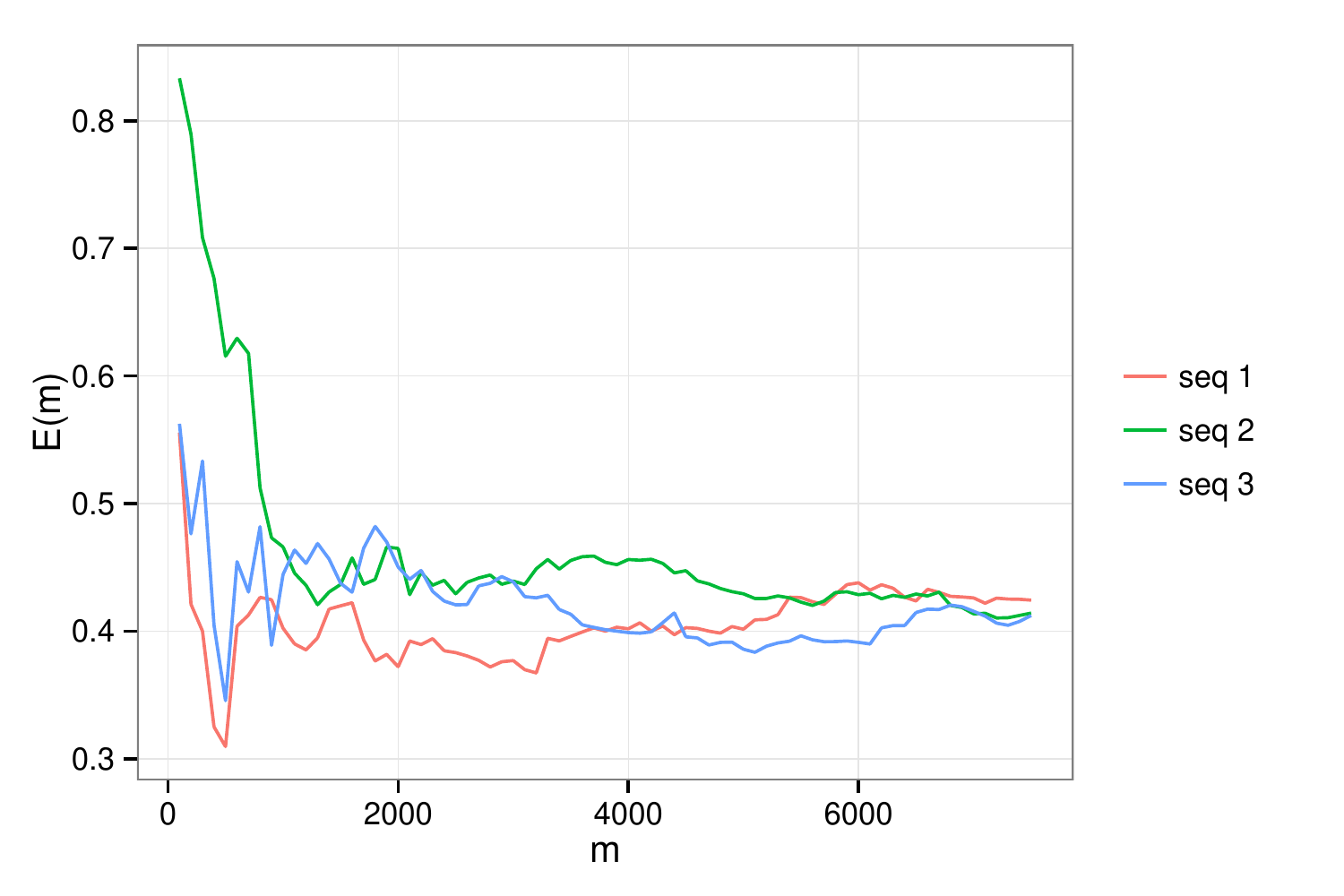}
      \caption{The behaviour of $E(m)$ with $\mathbb{P}=
      \mathbb{P}_{\mbox{\footnotesize{max}}}$, $p=0.9$, $q=0.7$, and $\mathbf{A}=(1,1)$. Note that for all three chains,
      $E(m)$ seems to converge to same positive and constant limit.}
\end{figure}\\
We start our simulations with ${\bf A}=(1,1)$. In Figure 1, three
different sequences  $Z_1,\ldots Z_{3\cdot 7500}$ are generated with
the same distribution corresponding to matrix $\mathbb{P}_{\rm
max}(0.9,0.7)$. In this case, for every $t$, $P(Z_t=(1,1))=0.819$
and turning a non-(1,1)-pair into a (1,1)-pair clearly has positive
effect to the score. From Figure 1, it is evident that $E(m)$ not
only is  bounded away from zero, but also converges to a strictly
positive constant limit (which we estimate to be around 0.4). The
convergence is not needed for {\bf A1} to hold, but based on that
picture, we conjecture that (at least for some models) $E_n$ a.s.
tends to a
constant limit. \\
Figure 1 also indicates that our choice of  $m$ is big enough to in
the sense that all different sequences behave similarly. Thus, in
what follows, we shall generate only one sequence for every
$\mathbb{P}$. In Figure 2, for several choices of $(p,q)$ three
different models, independent, maximal dependent (\ref{max}) and
minimal dependent (\ref{min}), are considered.
\begin{figure}
    \centering
    \begin{subfigure}[b]{0.5\textwidth}
        \includegraphics[width=\textwidth,trim={0 0 1cm 0},clip]{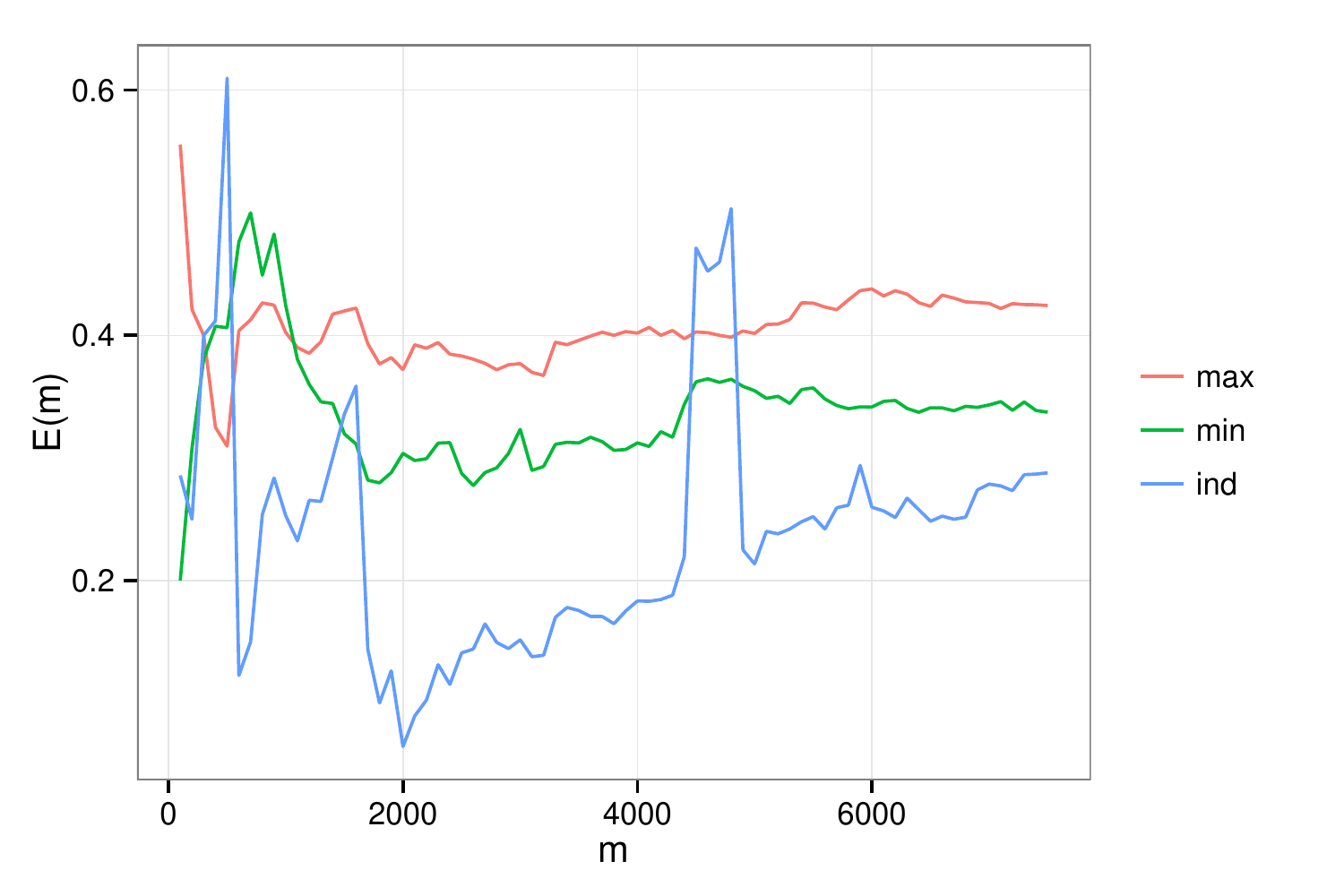}
        \caption{$p=0.9$, $q=0.7$}
    \end{subfigure}~
    \begin{subfigure}[b]{0.5\textwidth}
        \includegraphics[width=\textwidth,trim={0 0 1cm 0},clip]{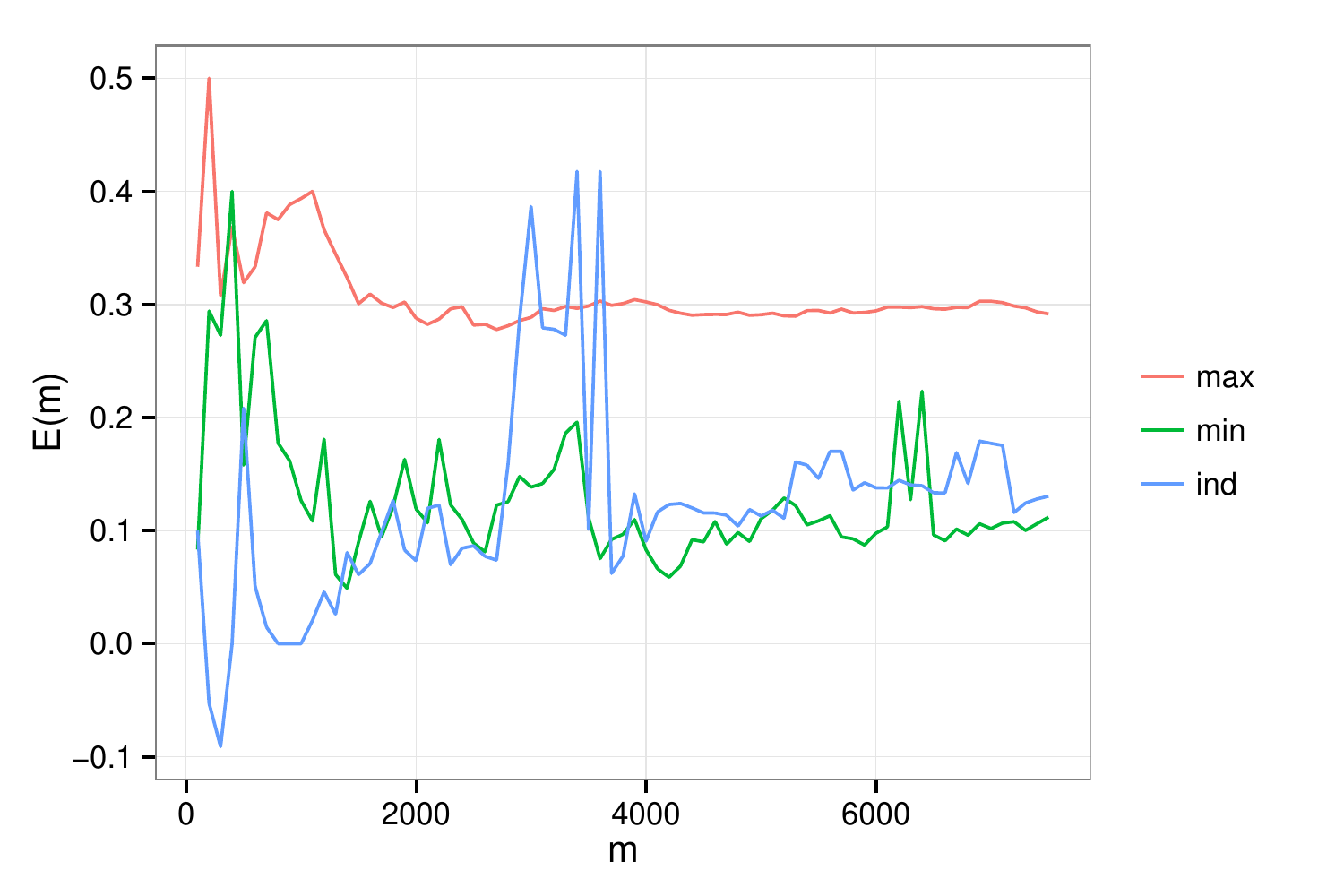}
        \caption{$p=0.8$, $q=0.6$}
    \end{subfigure}
 \begin{subfigure}[b]{0.5\textwidth}
        \includegraphics[width=\textwidth,trim={0 0 1cm 0},clip]{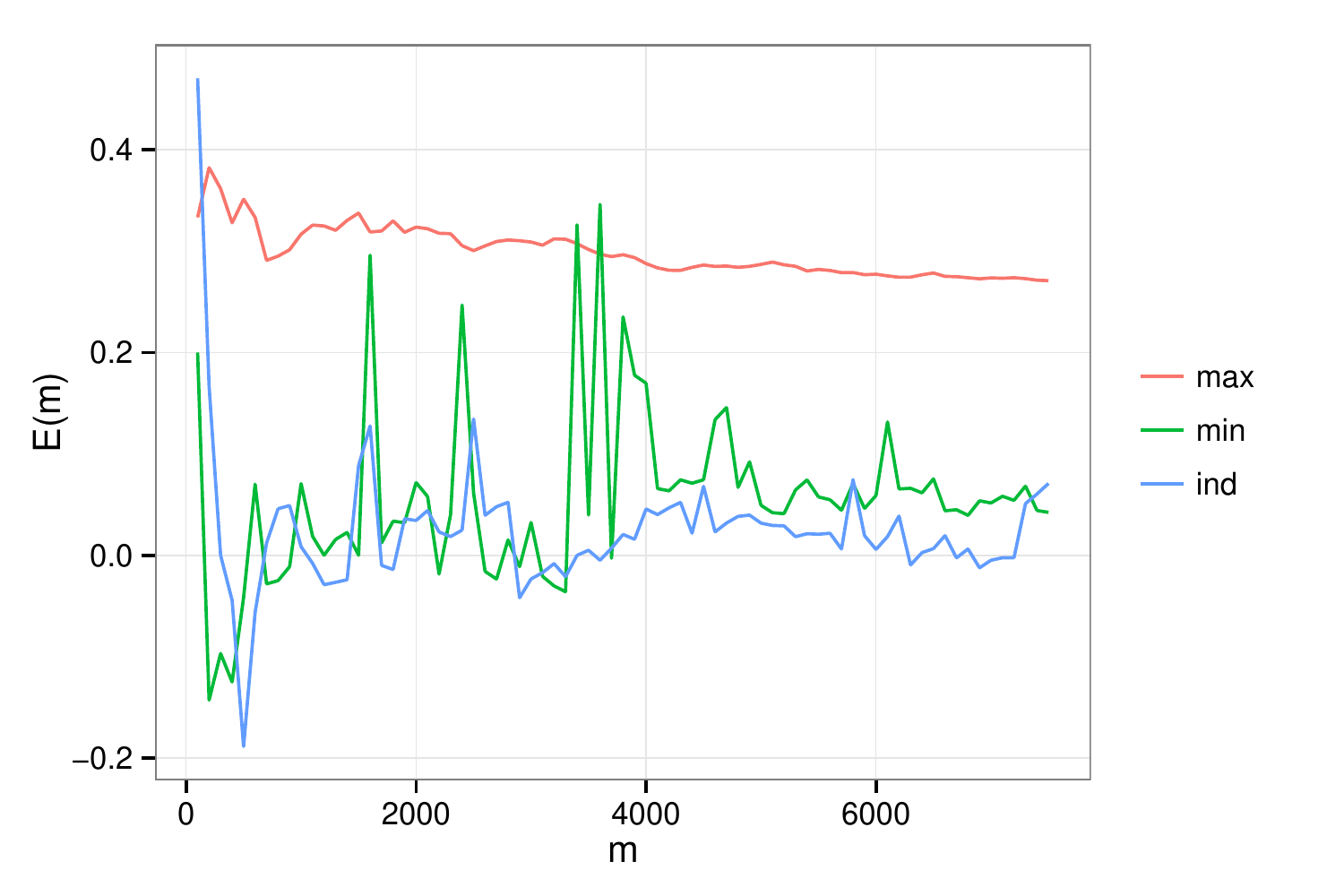}
        \caption{$p=0.7$, $q=0.7$} \label{fig:24}
    \end{subfigure}~
    \begin{subfigure}[b]{0.5\textwidth}
        \includegraphics[width=\textwidth,trim={0 0 1cm 0},clip]{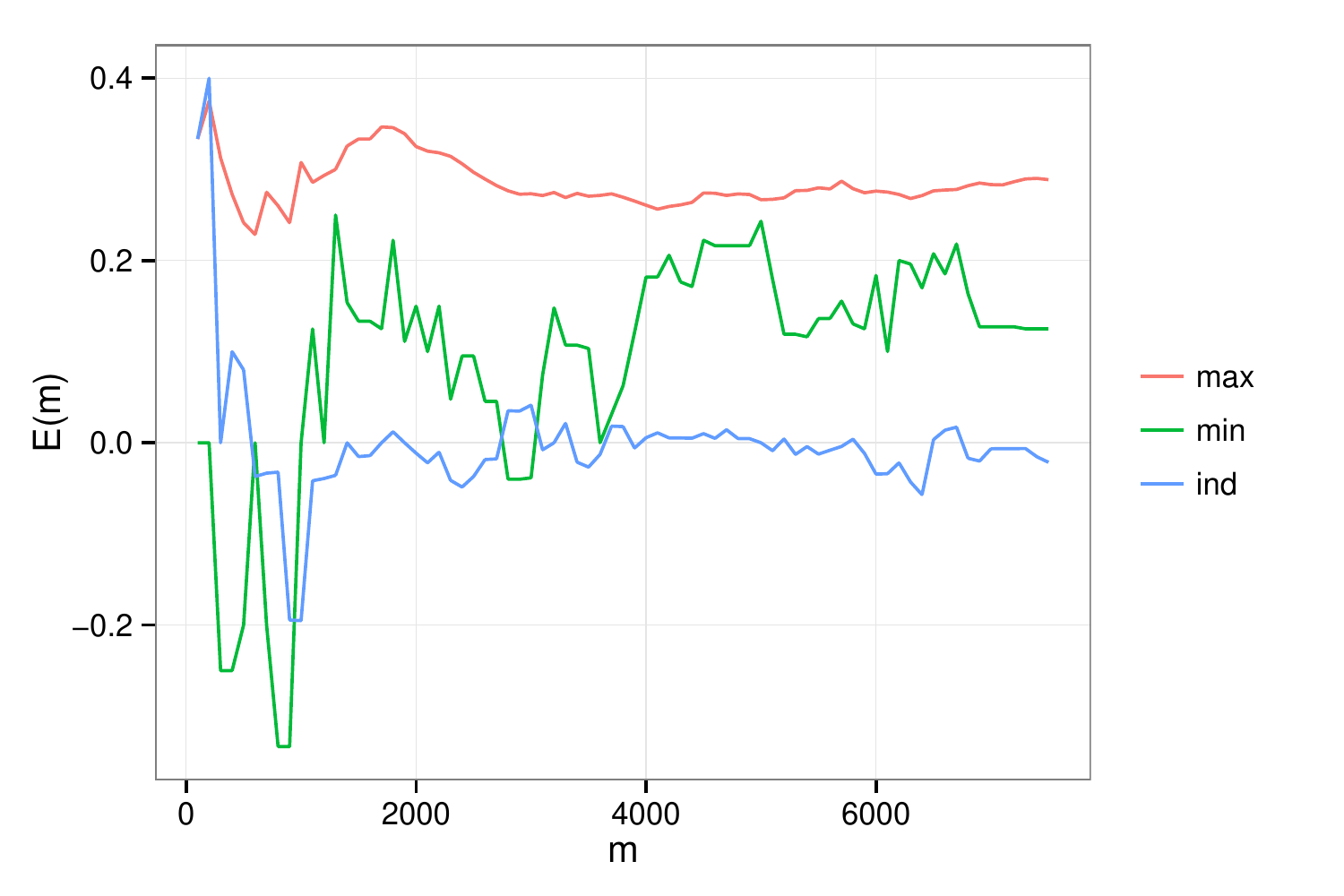}
        \caption{$p=0.7$, $q=0.4$}
    \end{subfigure}
    \caption{The behaviour of $E(m)$ with transition matrices
    $\mathbb{P}_{\mbox{\footnotesize{max}}}$, $\mathbb{P}_{\mbox{\footnotesize{min}}}$ and $\mathbb{P}_{\mbox{\footnotesize{ind}}}$, with $\mathbf{A}=(1,1)$.}
\end{figure}
From Figure 2, we see that in the case of $\mathbb{P}_{\rm max}$
(where the probability of $(1,1)$-pair is the highest, corresponding to the red line)
$E(m)$ clearly is bounded away from zero for every $(p,q)$. For
independent sequences and the sequences corresponding to
$\mathbb{P}_{\rm max}$, the desired boundedness is evident for
models with relatively big $p$ and $q$ (upper row), whilst for
smaller $p$ and $q$, it  might not be so. This  is due to relatively
low number of (1,1)-pairs. Indeed, for independent sequences the
probability of (1,1)-pair is $pq$ and so if $p=0.7$ and $q=0.4$ (D),
the proportion of (1,1)-pairs is  too small for our random
transformation to have positive effect to the score.
\begin{figure}
    \centering
    \begin{subfigure}[b]{0.5\textwidth}
        \includegraphics[width=\textwidth,trim={0 0 1cm 0},clip]{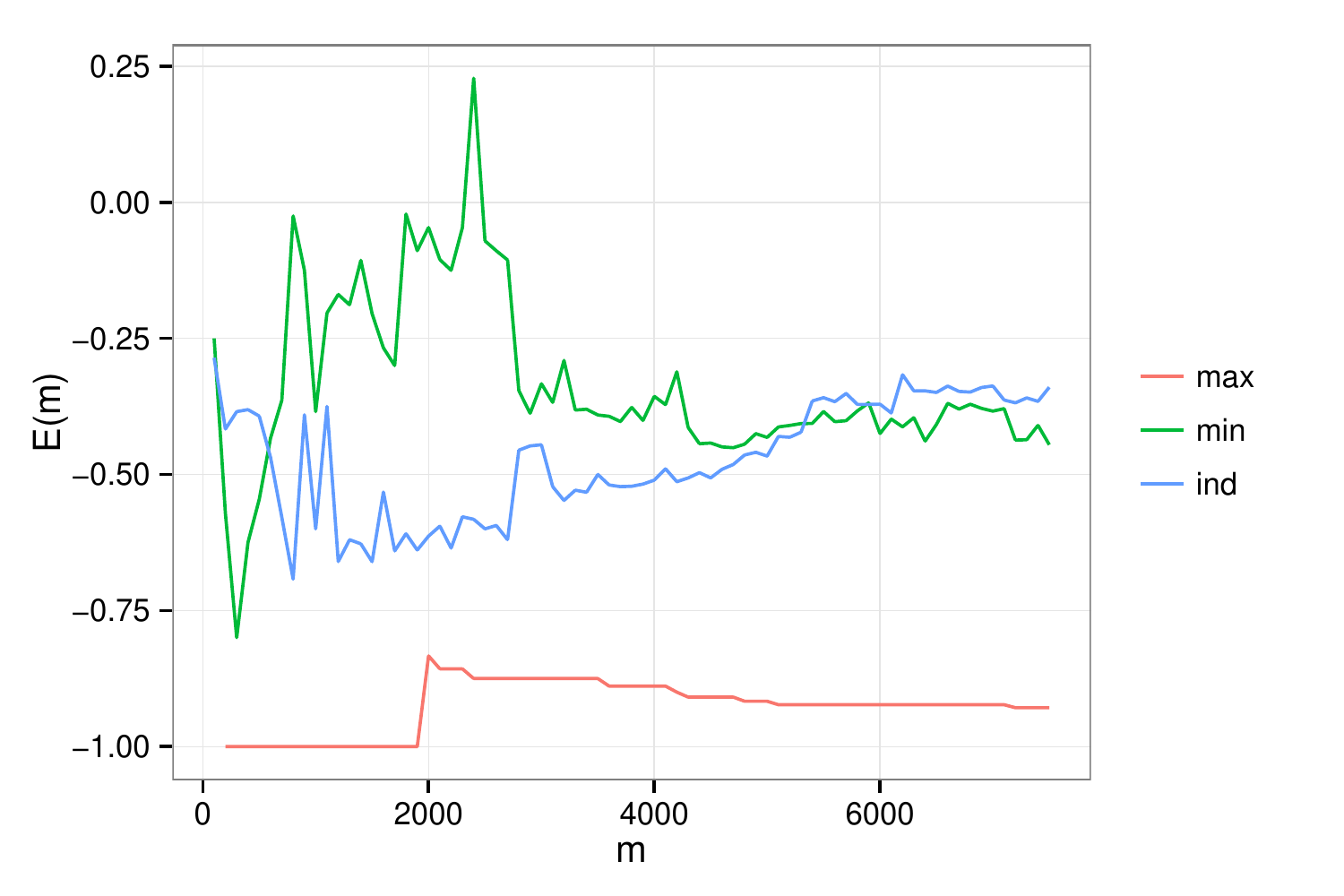}
        \caption{$p=0.7$, $q=0.7$}
    \end{subfigure}~
    \begin{subfigure}[b]{0.5\textwidth}
        \includegraphics[width=\textwidth,trim={0 0 1cm 0},clip]{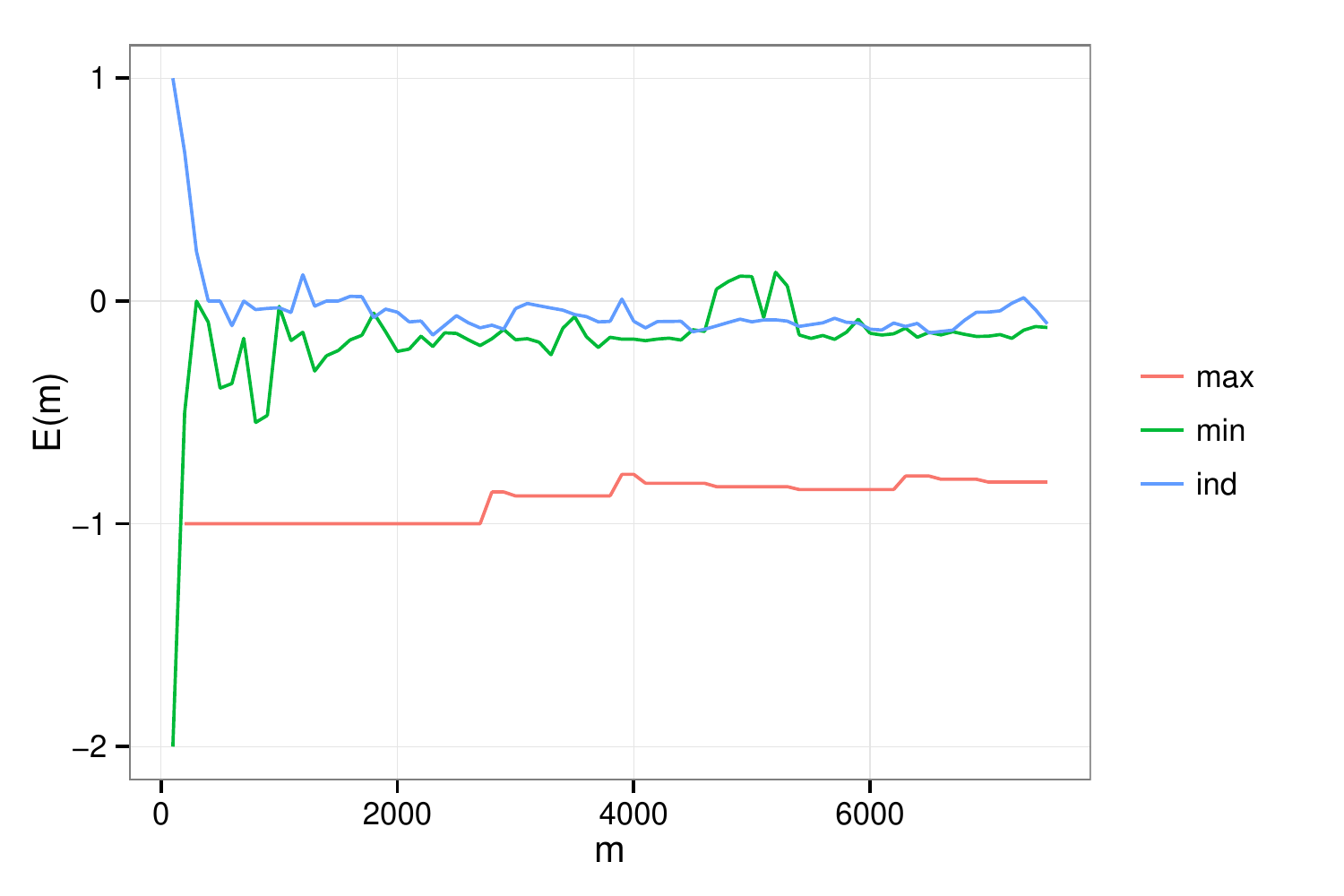}
        \caption{$p=0.7$, $q=0.4$}
    \end{subfigure}
    \caption{The behavior of $E(m)$ with transition matrices $\mathbb{P}_{\mbox{\footnotesize{max}}}$, $\mathbb{P}_{\mbox{\footnotesize{min}}}$ and $\mathbb{P}_{\mbox{\footnotesize{ind}}}$, with $\mathbf{A}=(0,1)$.}
\end{figure}\\
The random transformation considered so far is designed to increase
the score and for most of the models in Figure 2, it indeed does so.
We next consider a new ${\mathcal R}$ that tends to decrease the score.
For that, we just take ${\bf A}=(0,1)$.  In Figure 3, we repeat, with this new ${\mathcal
R}$, the
same simulations of cases (C) and (D) of Figure 2.
The choice of these cases is due to the fact that, for $\mathbb{P}_{\rm min}$ and
$\mathbb{P}_{\rm ind}$, the former transformation ${\mathcal R}$ (with ${\bf
A}=(1,1)$) did not convincingly show the existence of the positive lower bound
$\epsilon_0$. For the
independent marginals case $(p=q=0.7)$, the behavior of $E(m)$ is
much better now and we can conclude that  $E(m)$ converges a.s.
to a constant limit that for cases $\mathbb{P}_{\rm min}$ and
$\mathbb{P}_{\rm ind}$ are in $(-0.25,-0.5)$. Recall that the
negative limit also ensures {\bf A1}, we just formally have to
consider a different score function. In the other case, namely the case (B) of
Figure 3, we see indications of the convergence of $E(m)$, but for $\mathbb{P}_{\rm
min}$ and $\mathbb{P}_{\rm ind}$, it is difficult to conclude whether
the limit is different from zero or not.\\\\
In the case (A) of Figure 3, the probability $P(Z_t=(0,1))$ is 0.045
(max), 0.28 (min) and 0.49 (ind). The same probabilities in the case (B)
are 0.063, 0.418 and 0.245 respectively. We see that, especially
for $\mathbb{P}_{\rm max}$, the number of $(0,1)$-pairs in  the
sequence is very small and that jeopardies the simulations in this
case. The small number of $(0,1)$ pairs is evident from the
pictures, where the red line is not varying much. Therefore, we
combine the transformations by taking
$\mathbf{A}_1=\mathbf{B}_1=\mathbf{D}_1=(1,0)$ and
$\mathbf{A}_2=\mathbf{B}_2=\mathbf{D}_2=(0,1)$. As before, we
generate a Markov sequence $Z_1,\ldots,Z_{3 \cdot 7500}$ according
to the stationary distribution. We then apply the procedure
described above twice: first with $\mathbf{A}=(1,0)$, and then with
$\mathbf{A}=(0,1)$. In this way we obtain the sets
 $J_m^1$ and the LCS-differences $r_1(m,i)$ (corresponding to the pair $(1,0)$), and the sets $J_m^2$ and the
 LCS-differences $r_2(m,i)$ (corresponding to the pair $(0,1)$). Finally we define
 $$E(m):={1\over |J_m^1|+|J_m^2|} \left(\sum_{i=1}^{|J_m^1|}r_1(m,i) + \sum_{i=1}^{|J_m^2|}r_2(m,i) \right).$$
 Again, note that $R_{3m} \stackrel{d}{=} E(m)$, where $\mathcal{R}$
 is now the combined random transformation with $\mathbf{A}_1=\mathbf{B}_1=\mathbf{D}_1=(1,0)$
 and $\mathbf{A}_2=\mathbf{B}_2=\mathbf{D}_2=(0,1)$. When we described the combined transformation in Section~\ref{subsec:combined},
 we mainly considered the case 
 $q_1=q_2$: this is true for our transition matrices $\mathbb{P}_{\mbox{\footnotesize{max}}}$,
  $\mathbb{P}_{\mbox{\footnotesize{min}}}$,  $\mathbb{P}_{\mbox{\footnotesize{ind}}}$, so the use of combined random transformations in the simulations is justified
  \footnote{More specifically, note that when $|{\mathbb{A}}|=2$, then, as it is easy to see, the following conditions are sufficient for
  $q_1=q_2$ to hold: $\mathbb{P}_{22}=\mathbb{P}_{33}$, $\mathbb{P}_{23}=\mathbb{P}_{32}$, $\mathbb{P}_{21}=\mathbb{P}_{31}$, $\mathbb{P}_{12}=
  \mathbb{P}_{13}$, $\mathbb{P}_{24}=\mathbb{P}_{34}$, $\mathbb{P}_{42}=\mathbb{P}_{43}$. The transition matrices $\mathbb{P}_{\mbox{\footnotesize{max}}}$,
  $\mathbb{P}_{\mbox{\footnotesize{min}}}$, $\mathbb{P}_{\mbox{\footnotesize{ind}}}$ satisfy those equalities.}.
  The results of these new simulations 
  are presented in Figure
  4.  Since in all cases $P(Z_t=(0,1))=P(Z_t=(1,0))$, including (1,0) into $\mathcal{R}$ has the same effect as doubling the number of simulations in Figure 3.   We see that the red line now
  varies more and we can believe that there is a convergence. In the case $p=q=0.7$, the convergence of
  green and blue lines to the limits around -0.4 is now even more
  evident, and for the most difficult case $p=0.7, q=0.4$, we now can
  deduce that $\lim\sup_m E(m)<0$, i.e. {\bf A1} also holds in this
  case.
\\
  \begin{figure}
    \centering
 \begin{subfigure}[b]{0.5\textwidth}
        \includegraphics[width=\textwidth,trim={0 0 1cm 0},clip]{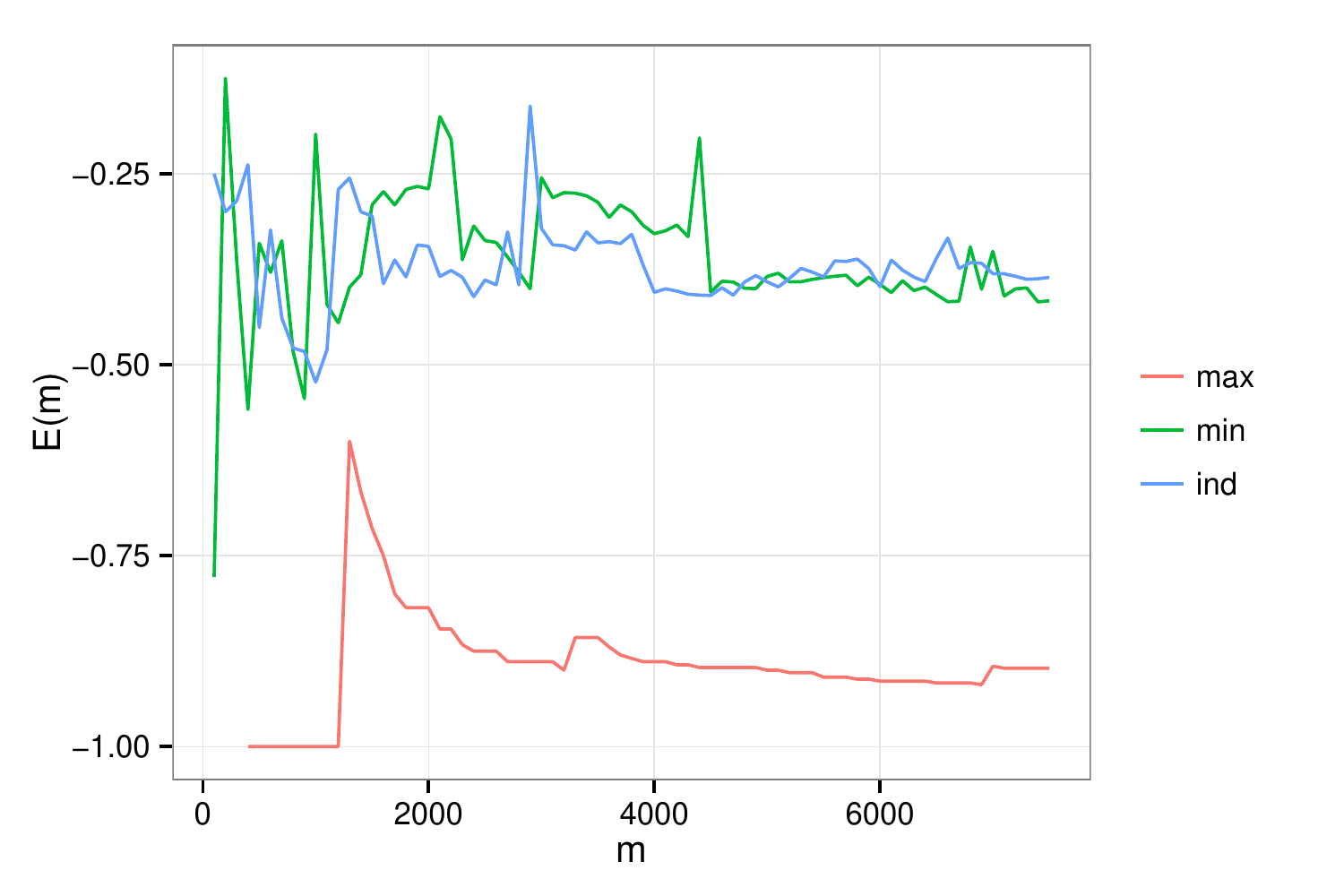}
        \caption{$p=0.7$, $q=0.7$}
    \end{subfigure}~
    \begin{subfigure}[b]{0.5\textwidth}
        \includegraphics[width=\textwidth,trim={0 0 1cm 0},clip]{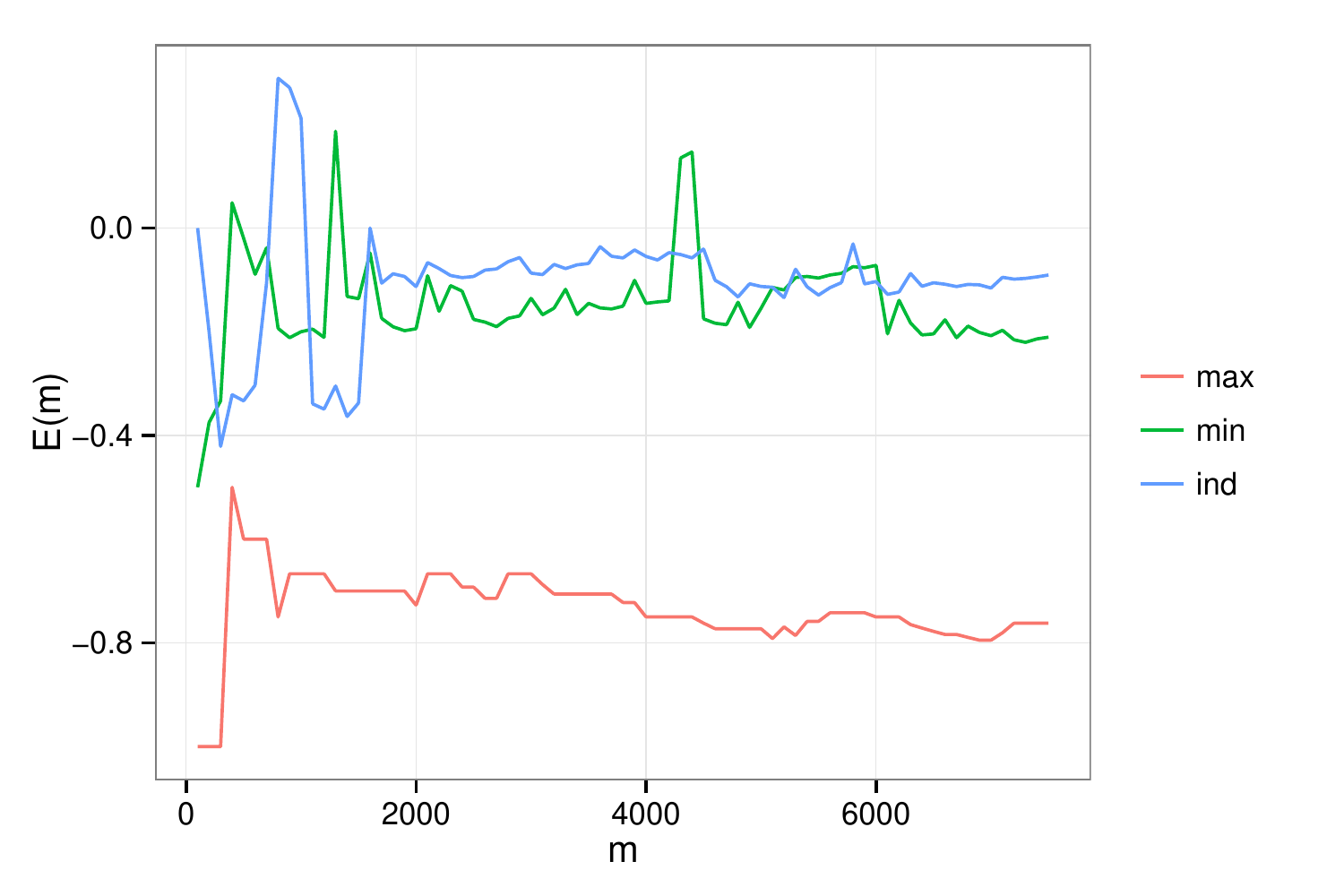}
        \caption{$p=0.7$, $q=0.4$}
    \end{subfigure}~
    \caption{The behaviour of $E(m)$ with transition matrices $\mathbb{P}_{\mbox{\footnotesize{max}}}$,
    $\mathbb{P}_{\mbox{\footnotesize{min}}}$ and $\mathbb{P}_{\mbox{\footnotesize{ind}}}$ using combined random transformations:  $\mathbf{A}_1=\mathbf{B}_1=\mathbf{D}_1=(1,0)$ and $\mathbf{A}_2=\mathbf{B}_2=\mathbf{D}_2=(0,1)$.}
\end{figure}

\section{The upper bound}\label{upper}
In order to judge the sharpness of the lower bound, we briefly
calculate the upper bounds of $\Phi\big(|L_n-EL_n|\big)$ in the case
$\phi(x)=x^r$. In the case of independent random variables, there
are many ways of finding upper bound starting from Efron-Stein type
of inequalities when $r=2$. For an overview of several methods for
obtaining the upper bound, see \cite{HoudreLember}. However, most of
the methods assume independence of random letters.   In the case of
PMC-model, probably the easiest way to get an upper bound of the
correct order seems to be via the following McDiarmid's-type of
inequality for Markov chains (see \cite[Corollary 2.9]{paulin})
\begin{theorem} Let $Z:=Z_1,\ldots,Z_n$ be a
homogeneous Markov chain with state space $\mathcal{Z}$ and mixing time
$t(\epsilon)$. Let $f: \mathcal{ Z}^n\to \mathbb{R}$ be function
satisfying the bounded difference inequality; for every $z, z'\in
\mathcal{ Z}^n$
$$|f(z)-f(z')|\leq \sum_{i=1}^n c_i I_{\{z_i\ne z'_i\}},$$
where $c:=(c_1,\ldots,c_n)$ are some nonnegative constants. Then,
for any $s>0$
\begin{equation}\label{McDiarmid}
P\Big(|f(Z)-Ef(Z)|>s\Big)\leq 2\exp\Big[-{ s^2\over 8\|c\|^2 t_{mix}}\Big],
\end{equation}
where $\|c\|^2=\sum_i c_i^2$ and  $t_{mix}=t(1/4)$.\end{theorem}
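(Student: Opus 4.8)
Since the theorem is quoted verbatim from \cite[Corollary 2.9]{paulin}, in the paper we simply invoke it; but here is the route I would take to reconstruct its proof. I would start from the Doob martingale of $f(Z)$: with $\mathcal{F}_i:=\sigma(Z_1,\ldots,Z_i)$ and $D_i:=E[f(Z)\mid\mathcal{F}_i]-E[f(Z)\mid\mathcal{F}_{i-1}]$ one has $f(Z)-Ef(Z)=\sum_{i=1}^n D_i$, and the plan is to bound the conditional oscillation of each $D_i$ by coupling two copies of the chain that agree on the first $i-1$ coordinates and then to apply Azuma--Hoeffding. The subtlety — and essentially the whole content of the result — is that the crude bound obtained this way carries a factor of order $t_{mix}$ from the geometric coupling tail, so squaring and summing over the $n$ coordinates would leave a useless $t_{mix}^2$ in the exponent instead of the claimed $t_{mix}$.

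The device I would use to kill the extra $t_{mix}$ is to run the argument at block scale. Lump the chain into consecutive blocks of length $\ell:=t_{mix}$, setting $W_k:=(Z_{(k-1)\ell+1},\ldots,Z_{k\ell})$ for $k=1,\ldots,N$ with $N=\lceil n/\ell\rceil$ (the last block possibly shorter). Since $W_{k+1}$ depends on the past only through the last coordinate of $W_k$, the sequence $W_1,\ldots,W_N$ is again a Markov chain, and submultiplicativity of the mixing time — that $k\,t_{mix}$ steps bring pairwise total-variation distance below $2^{-k}$ — shows that a single block-step already contracts: starting from any two blocks the two chains can be coupled so that their boundary states disagree at block $k+j$ with probability at most $2^{-(j-1)}$, after which all later blocks are made identical. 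Viewed as a function of $(W_1,\ldots,W_N)$ the same $f$ satisfies the bounded-difference inequality with constants $\tilde c_k:=\sum_{i\text{ in block }k}c_i$, so the coupling yields conditional martingale-difference bounds $a_k\le\tilde c_k+\sum_{j\ge1}2^{-(j-1)}\tilde c_{k+j}$, hence $\sum_k a_k^2\le C_1\sum_k\tilde c_k^2$ with $C_1$ an absolute constant, by a weighted Cauchy--Schwarz inequality. The crucial step is now just Cauchy--Schwarz \emph{inside} each block: $\tilde c_k^2=\bigl(\sum_{i\text{ in block }k}c_i\bigr)^2\le\ell\sum_{i\text{ in block }k}c_i^2$, so $\sum_k\tilde c_k^2\le\ell\,\|c\|^2=t_{mix}\,\|c\|^2$ and the mixing time enters \emph{linearly}. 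Feeding this into Azuma--Hoeffding, $P(|f(Z)-Ef(Z)|>s)\le 2\exp(-s^2/(2\sum_k a_k^2))$, then produces a bound of exactly the announced shape $2\exp(-s^2/(c\,\|c\|^2 t_{mix}))$.

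The main obstacle is then pinning down the constant: the crude version above already gives the correct order $t_{mix}$ but with a denominator constant worse than $8$, and recovering the value $8$ requires the careful bookkeeping of \cite{paulin}, where this same argument is organized through Marton couplings together with a spectral estimate for the operator norm of the associated upper-triangular ``mixing matrix''. Since \cite[Corollary 2.9]{paulin} already records the inequality in precisely the form we use, we take it as given.
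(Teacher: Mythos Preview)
Your proposal is correct and matches the paper exactly: the paper does not prove this theorem at all, but simply cites it as \cite[Corollary 2.9]{paulin} and immediately applies it to $f=L$. Your recognition of this, together with the brief sketch of the Marton-coupling/blocking heuristic and the acknowledgment that the sharp constant $8$ requires the operator-norm bookkeeping of \cite{paulin}, is entirely appropriate --- there is no independent argument in the paper to compare against.
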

We are going to apply this theorem for $f=L$. Since the change of a
value of $Z_i$ changes the score by at most $2\Delta$, we have the
bounded difference property with $c_i=2\Delta$ and $\|c\|^2=n4\Delta^2$.
Since, by assumption $Z$ is aperiodic, there exists $m\geq 1$ such
that
$$\min_{\mathbf{A},\mathbf{B}\in \A\times \A}P(Z_{1+m}=\mathbf{B}|Z_1=\mathbf{A})=:p_o>0.$$
Then, as it is well-known,
$$\max_{\mathbf{A}\in \A\times \A}\|\pi(\cdot)-P(Z_{1+n}\in \cdot|Z_1=\mathbf{A})\|\leq C
\rho^n,$$ where $\pi$ is stationary distribution of $Z$, $\|\cdot\|$
is total variation distance, $\rho:=(1-|\A|^2p_o)^{1\over m}$ and
$C=1$ if $r=1$, and $C:=(1-|\A|^2p_o)^{-1}$, otherwise.
 Therefore
$$t(\epsilon)\leq {\ln{\epsilon}-\ln C\over \ln(\rho)}<\infty,\quad t_{mix}\leq {-(\ln{4}+\ln C)\over \ln(\rho)}<\infty.$$
Applying now equation~\eqref{McDiarmid}, we get
\begin{align}\label{eq:HoeffdingIneqOptScore}
{P}\left(\left|L(Z)-{E}\left(L(Z)\right)\right|\geq s\right)\leq 2\exp\left(-\frac{s^{2}}{nF}\right),
\end{align}
where $F:=32\Delta^2t_{mix}.$ From that it is trivial to get the upper bound.
Take $W_n=|L(Z)-{E}\left(L(Z)\right)|$
\begin{align*}
{E}\left(W_{n}^{r}\right)=\int_{0}^{\infty}{P}\left(W_{n}\geq t^{\frac{1}{r}}\right)dt
\le x+2\int_{x}^{\infty}\exp\left(-\frac{t^{\frac{2}{r}}}{nF}\right)dt.
\end{align*}
Minimizing in $x$, i.e., taking $x=\left(F(\ln 2)n\right)^{{r/2}}$,
and changing variables $u=t^{2/r}/(Fn)$, lead to:
\begin{align*}
{E}\left(W_{n}^{r}\right)\leq\left(F(\ln 2)n\right)^{\frac{r}{2}}+rF^{r/2}n^{\frac{r}{2}}\int_{\ln 2}^{\infty}e^{-u}u^{\frac{r}{2}-1}du
=n^{\frac{r}{2}}F^{r/2}\left[(\ln 2)^{\frac{r}{2}}+r\int_{\ln 2}^{\infty}e^{-u}u^{\frac{r}{2}-1}du\right],
\end{align*}
an upper bound of the form $C(r)\,n^{r/2}$, where
\begin{align*}
C(r):=F^{r/2}\left[(\ln 2)^{\frac{r}{2}}+r\int_{\ln 2}^{\infty}e^{-u}u^{\frac{r}{2}-1}du\right].
\end{align*}
When $x=0$, the corresponding constant is slightly bigger than
$C(r)$, and is given by:
\begin{align*}
D(r):=rF^{r/2}\int_{0}^{\infty}e^{-u}u^{\frac{r}{2}-1}du=rF^{r/2}\,\Gamma\left(\frac{r}{2}\right).
\end{align*}
\subsection{Appendix: proof or Theorem~\ref{general}}
Let $B_n \subset \Z_n$ be the set of outcomes of $Z$ such that
$$\big\{E[L(\tZ)-L(Z)|Z]\geq \epsilon_o\}=\{Z\in
B_n\}.$$ Let the set $\V^o\subset \S^V$ be defined as follows:
\begin{equation}
v\in \V^o \quad \Leftrightarrow \quad P(Z\not \in B_n|V=v)\leq
\sqrt{\Delta_n}.
\end{equation}
Now
$$
\Delta_n \geq P(Z\not \in B_n)\geq \sum_{v\not\in \V^o}P(Z\not \in
B_n|V=v)P(V=v)>\sqrt{\Delta_n}P(V\not \in \V^o),\quad \Rightarrow
\quad P(V\not \in \V^o)\leq \Delta_n^{1\over 2}.$$
Furthermore, for every $v\in \V^o$, let $\U^o(v)\subset \S(v)$ be
defined as follows
\begin{equation}
u\in \U^o(v) \quad \Leftrightarrow \quad P(Z\not \in
B_n|V=v,U=u)\leq {\Delta_n}^{1\over 4}.
\end{equation}
Again,
\begin{align*}
\sqrt{\Delta_n} & \geq P(Z\not \in B_n|V=v)\geq \sum_{u\not\in \U^o(v)}P(Z\not \in B_n|V=v, U=u)P(U=u|V=v)\\
& >{\Delta_n}^{1\over 4}P(U\not \in \U^o(v)|V=v),\quad \Rightarrow
\quad P(U\not \in \U^o(v)|V=v)\leq \Delta_n^{1\over 4}.\end{align*}
We now show that there exists $n_o$ so big that when $v\in \V^o\cap
\V$ and $u\in \U(v)\cap \U^o(v)$, then
\begin{equation}\label{luv}
l(u+1,v)-l(u,v)\geq {\e_o\over 2}.\end{equation}
 Let
$Z_{(u,v)}$ be a random vector having the distribution $P_{(u,v)}$.
By {\bf A3}, thus,
$$l(u+1,v)=E[L(\mathcal{R}(Z_{(u,v)}))].$$
Hence
\begin{align*}
l(u+1,v)-l(u,v)&=E[L(\mathcal{R}(Z_{(u,v)}))]-E[L({Z}_{(u,v)})]=E[L(\mathcal{R}(Z_{(u,v)}))-L({Z}_{(u,v)})]\\
&=E\big(E[L(\mathcal{R}(Z_{(u,v)}))-L({Z}_{(u,v)})\big|Z_{(u,v)}]\big).\end{align*}
By assumption {\bf A2}, for any pair of sequences $z$, the worst
decrease of the score, when applying the random transformation is $-
A$. Hence,
\begin{equation*}\label{auto}
E\big(E[L(\mathcal{R}(Z_{(u,v)}))-L({Z}_{(u,v)})\big|{Z}_{(u,v)}]\big)\geq
\e_o P\big({Z}_{(u,v)} \in B_n \big)-A P({Z}_{(u,v)}\not \in
B_n)\geq \e_o(1-\Delta_n^{1\over 4})-A\Delta_n^{1\over
4}.\end{equation*} The last inequality follows from the fact that by
definition of $\U^o(v)$, when
 $v\in \V^o$ and $u\in \U^o(v)$, it holds
$$P\big({Z}_{(u,v)} \in B_n\big)=P(Z\in B_n|V=v,U=u)\geq
1-{\Delta_n}^{1\over 4}.$$ Since $\Delta_n\to 0$, there exists $n_o$
so big that $\e_o(1-\Delta_n^{1\over 4})-A\Delta_n^{1\over 4}\geq
{\e_o\over 2}$, provided $n>n_0$. In what follows, we assume $n>n_o$.\\\\
Fix   $v\in \V^o\cap \V$ and consider the set $\U(v)\cap \U^o(v)$
and $n>n_o$. When $u\in \mathcal{ U}_n(v) \cap \U^o(v)$, then by
equation~\eqref{luv} $ l(u+1,v)-l(u,v)\geq\frac{\e_{o}}{2}.$ When
$u\not \in \mathcal{ U}_n(v) \cap \U^o(v)$, then $l(u+1,v)-l(u,v)\geq
-A$.
\\\\
Recall $\mathcal{ U}_n(v)=\{u_n(v)+1,\ldots,u_n(v)+m_n(v)\}$.   The set
$\mathcal{ U}_n(v) \cap \U^o(v)$  can be represented as the union of
disjoint intervals of  $\mathcal{ U}_n(v)$
 , i.e.
$$\mathcal{ U}_n(v) \cap \U^o(v)=\bigcup_{j=1}^{k(v)} I_j(v),$$
where $$I_j(v)=\{u_n(j,v)+1,\ldots,u_n(j,v)+m_n(j,v)\}$$ is a
subinterval of $\mathcal{ U}_n(v)$. Obviously the number of intervals
$k(v)$ as well as the intervals $I_j(v)$ depend on $n$. On every
interval $I_j(v)$, the function $l(\cdot,v)$ increases with the
slope at least $\frac{\e_{o}}{2}$ i.e.
\begin{equation}\label{intervals}
\text{ if }u\in I_j(v)\text{, then  }
l(u+1,v)-l(u,v)\geq\frac{\e_{o}}{2}.
\end{equation}
 Let us consider the sets
$$J_j(v):=\{l(u_n(j,v)+1,v),\ldots,l(u_n(j,v)+m_n(j,v),v)\}\quad j=1,\ldots,k(v).$$ Thus $J_j(v)$
is is the image of the set $I_j(v)$ when applying $l(\cdot,v)$. Note
that if $u=u_n(j,v)+m_n(j,v)$, i.e. $u$ is the last element in the
interval,  then $l(u+1,v)$ is outside of the interval $J_j(v)$. We
know that all elements of $J_j(v)$ are at least
$\frac{\e_{o}}{2}$-apart from each other. However, the
intervals $J_j(v)$ might overlap (even thought we know that the intervals
$I_j(v)$ do not). Since for any $u\in \mathcal{ U}_n(v) \backslash
\U^o(v)$, it holds that $l(u+1,v)-l(u,v)\geq -A$, we have
\begin{equation}\label{decrease}
\sum_{u\in \mathcal{ U}_n(v) \backslash
\U^o(v)}\big(l(u+1,v)-l(u,v)\big)\geq -A|\mathcal{ U}_n(v) \backslash
\U^o(v)|.
\end{equation}
The inequality equation~\eqref{decrease} together with equation~\eqref{intervals}
implies that the sum of the lengths of (integer) intervals $J_j(v)$
differs from the length of the set $J(v):=\bigcup_{j=1}^{k(v)} J_j(v)$
at most by $A|\mathcal{ U}_n(v) \backslash \U^o(v)|$. Formally, defining
for any finite set of real numbers $T$ the length $\ell (T)$ as the
difference between maximum and minimum element of $T$ i.e.
$$\ell (J_j(v)):=l(u_n(j,v)+m_n(j,v),v)-l(u_n(j,v)+1,v),$$  we obtain
\begin{equation}
\sum_{j=1}^{k(v)}\ell (J_j(v))-\ell (J(v))\leq \sum_{j=1}^{k(v)}\ell
(J_j(v))-\Big(l(u_n(k,v)+m_n(k,v),v)-l(u_n(1,v)+1,v)\Big)\leq
A|\mathcal{ U}_n(v) \backslash \U^o(v)|.
\end{equation}
The first inequality follows from the fact that
$$l(u_n(k,v)+m_n(k,v),v)-l(u_n(1,v)+1,v)\leq \ell (J(v))$$
and the second from equations~\eqref{decrease} and \eqref{intervals}.
\\\\
The number of $\frac{\e_{o}}{2}$-apart points
needed for covering an (real) interval with length  $A|\mathcal{ U}_n(v)
\backslash \U^o(v)|$ is at most $${2A|\mathcal{ U}_n(v) \backslash
\U^o(v)|\over \e_{o}}+1.$$ This means that due to the
overlapping at most ${2A|\mathcal{ U}_n(v) \backslash \U^o(v)|\over
\e_{o}}+1$  points that are
$\frac{\e_{o}}{2}$-apart  will be lost implying that in the
set $J(v)$ there are at least
$$|\mathcal{ U}_n(v)|-{2A|\mathcal{ U}_n(v) \backslash \U^o(v)|\over
\e_{o}}-1=m_n(v)-{2A|\mathcal{ U}_n(v) \backslash \U^o(v)|\over
\e_{o}}-1$$ points that are (at least)
$\frac{\e_{o}}{2}$-apart from each other.
\\\\
Using the inequality (recall $v\in \V^o$)
 $$P(U\not \in \U^o(v)|V=v)\leq \Delta_n^{1\over 4}$$
and equation~\eqref{varphi} we obtain
\begin{align*}
 {\Delta_n}^{1\over 4}\geq {P}(U\in \mathcal{ U}_n(v)\backslash \mathcal{
 U}^o_n(v)|V=v)=\sum_{u\in \mathcal{ U}_n(v)\backslash \mathcal{
 U}^o_n(v)} {P}\left(U=u\right|V=v)\geq |\mathcal{ U}_n(v)\backslash
 \U^o(v)|\varphi_v(n)
\end{align*}
implying that $$ |\mathcal{ U}_n(v)\backslash
 \U^o(v)|\leq {\Delta_n}^{1\over 4}\varphi_v(n)^{-1}. $$
Thus by {\bf A4}, there exists $n_1$ such that
$$m_n(v)-{2A|\mathcal{ U}_n(v) \backslash \U^o(v)|\over
\e_{o}}-1\geq m_n(v)-2A{{\Delta_n}^{1\over 4}\over
\e_o\varphi_v(n)}-1\geq
{(c(v)\epsilon_o-2A{\Delta_n}^{1\over
4}-\epsilon_o\varphi_v(n))\over \epsilon_o\varphi_v(n)}\geq
{r_v(n)\over \varphi_v(n)},\quad \forall n>n_1$$ where
$$r_v(n):=c(v)-{2A\Delta^{1\over 4}_n\over \epsilon_o}-\varphi_v(n)\to c(v)$$
uniformly with respect to $v \in \V$
(for the definition of uniform convergence with respect to a variable in a sequence of sets,
 see for instance \cite[Definition 2.2]{cf:BZ2003}).
To summarize: the set  $$J(v)\subseteq
\{l(u_n(v)+1,v),\ldots,l(u_n(v)+m_n(v),v)\}$$ contains at least
${r_v(n)\over \varphi_v(n)}$ elements being
$\frac{\e_{o}}{2}$-apart from each other.
\\\\
Finally, define the set
$$\mathcal{ A}_n(v):=\Big\{ u \in \mathcal{ U}_n(v):\quad  |l(u,v)-\mu_n|\geq {\e_o
r_v(n)\over 8 \varphi_v(n)} \Big \}.$$ Since the interval
$$\Big[\mu_n- {\e_o
r_v(n)\over 8 \varphi_v(n)},\mu_n+ {\e_o r_v(n)\over 8
\varphi_v(n)}\Big]$$ contains at most ${r_v(n) \over 2
\varphi_v(n)}+1$ elements that are $\frac{\e_{o}}{2}$-apart
from each other and $J(v)$ contains at least ${r_v(n)\over
\varphi_v(n)}$ of such elements, it follows that that the set
$$\mathcal{ B}_n(v):=\{l(u,v): u\in \mathcal{ A}_n(v)\}$$ contains at least  ${r_v(n)\over 2\varphi_v(n)}-1$
points being $\frac{\e_{o}}{2}$-apart from each other, and
in particular, the set  $\mathcal{ A}_n(v)$ contains at least
${r_v(n)\over 2\varphi_v(n)}-1$ points i.e $|\mathcal{ A}_n(v)|\geq
{r_v(n)\over 2\varphi_v(n)}-1$.
\\\\
By conditional Jensen (recall $\Phi$ is convex), we get
$$E[\Phi\big(|L(Z)-\mu_n|\big)|V,U]\geq
\Phi\big(|E[L(Z)|V,U]-\mu_n|\big)=\Phi\big(|l(U,V)-\mu_n|\big).$$
Therefore (recall also that $\Phi$ is increasing)
\begin{align*}
E\Phi\big(|L(Z)-\mu_n|\big)&=E\Big(E[\Phi\big(L(Z)-\mu_n|\big)|V,U]\Big)\geq E\Phi\big(|l(U,V)-\mu_n|\big)\\
&\geq \sum_{v\in \V\cap \V^o}\sum_{u\in \U(v)}\Phi\big(|l(u,v)-\mu_n|\big)P(U=u|V=v)P(V=v)\\
& \geq \sum_{v\in \V\cap \V^o}\sum_{u\in \mathcal{ A}_n(v)}\Phi\big(|l(u,v)-\mu_n|\big)P(U=u|V=v)P(V=v)\\
&\geq  \sum_{v\in \V\cap \V^o}\sum_{u\in \mathcal{ A}_n(v)}\Phi\big(|l(u,v)-\mu_n|\big)\varphi_v(n)P(V=v)\\
&\geq \sum_{v\in \V\cap \V^o}\Phi\Big({\e_o
r_v(n)\over 8 \varphi_v(n)} \Big)|\mathcal{ A}_n(v)|\varphi_v(n)P(V=v)\\
&\geq \sum_{v\in \V\cap \V^o}\Phi\Big({\e_o
r_v(n)\over 8 \varphi_v(n)} \Big)\Big({r_v(n)\over 2\varphi_v(n)}-1\Big)\varphi_v(n)P(V=v)\\
&= \sum_{v\in \V\cap \V^o}\Phi\Big({\e_o r_v(n)\over 8
\varphi_v(n)}\Big)\Big({r_v(n)\over 2}-\varphi_v(n)\Big)P(V=v)
.\end{align*}
In particular, if $\varphi(n)=\sup_{v \in \V} \varphi(n)\to 0$ as $n \to \infty$ (that is,
$\varphi_v(n)$ converges to $0$ uniformly with respect to $v \in \V$) then
there exists $n_2$ such that $r_v(n)>{c\over
2}$ and $\varphi_v(n) \le c/8$ for all $n \ge n_2$, $v \in \V$. Thus, if $n \ge n_2$, then
$$E\Phi\big(|L(Z)-\mu_n|\big)\geq \Phi\Big({\e_o c\over 16
\varphi(n)}\Big)\Big({c\over 4}-\varphi(n)\Big)P(V\in  \V\cap \V^o)\geq
\Phi\Big({\e_o c\over 16 \varphi(n)}\Big){c\over 8}(P(V\in
\V)-\Delta^{1\over 2}_n).$$
If, in addition, $P(V\in \V)$ is bounded
away from zero, then for any constant $c_o$ satisfying $b_o c/8>c_o>0$ we can choose $n_3\ge n_2$ such that
for all $n \ge n_3$
\begin{align}\label{low}
E\Phi\big(|L(Z)-\mu_n|\big) \geq  \Phi\Big({\e_o c\over 16
\varphi(n)}\Big)c_o.\end{align}

\section*{Acknowledgements}

The authors acknowledge financial support from INDAM (Istituto
Nazionale di Alta Matematica), from  Estonian Science foundation
Grant no. 5822 and  from Estonian  institutional research funding
IUT34-5.

\bibliographystyle{alpha}


\end{document}